\def\BibTeX{{\rm B\kern-.05em{\sc i\kern-.025em b}\kern-.08em
    T\kern-.1667em\lower.7ex\hbox{E}\kern-.125emX}}
\newcommand{\RR}{\mathbb{R}}
\newcommand{\NN}{\mathbb{N}}
\newcommand{\KK}{\mathbb{K}}
\newcommand{\TT}{\mathbb{T}}
\newcommand{\EE}{{\mathbb{E}}}
\newcommand{\sH}{\mathcal{H}}
\newcommand{\sV}{\mathcal{V}}
\newcommand{\sR}{\mathcal{R}}
\newcommand{\calP}{{\mathcal{P}}}
\newcommand{\calM}{\mathcal{M}}
\newcommand{\calN}{\mathcal{N}}
\newcommand{\calL}{{\mathcal{L}}}
\newcommand{\calB}{{\mathcal{B}}}
\newcommand{\Ev}{\mathbb{E}}
\newcommand{\Res}{\Tr}
\newcommand{\Tr}{{T}}
\newcommand{\Ext}{\mathcal{E}}
\newcommand{\rH}{\mathcal{R}}
\newcommand{\knl}{\mathfrak{K}}
\newcommand{\rnl}{\mathfrak{R}}
\newtheorem{theorem}{Theorem}
\newtheorem{proof}{Proof}
\begin{document}

\title{Kernel Methods for Regression in Continuous Time over Subsets and Manifolds
}

\author{Nathan Powell\thanks{ Department of Mechanical Engineering, Virginia Tech, Blacksburg. VA email: {nrpowell@vt.edu, boweiliu@vt.edu, kurdila@vt.edu}}\and  Jia Guo\thanks{ Department of Mechanical Engineering, Geogia Tech, Atlanta, GA. email: {jguo@gatech.edu}} \and Sai Tej Parachuri \thanks{Department of Mechanical Engineering and Mechanics, Lehigh University, Bethlehem, PA.  
email: {saitejp@lehigh.edu}} \and John Burns \thanks{Interdisciplinary Center for Applied Mathematics (ICAM), Virginia Tech, Blacksburg, VA 24060, USA}  \and  Boone Estes \footnotemark[1] \and Andrew Kurdila \footnotemark[1] }
\maketitle
%\author{
%\and
%John Burns 
%\and
%Jia Guo
%\and
%Andrew Kurdila
%\and
%Sai Tej Paruchuri
%\and 
%Nathan R. Powell
%}

\begin{abstract}
This paper derives error bounds for regression in continuous time over subsets of certain types of Riemannian  manifolds.The regression problem is typically driven by a nonlinear evolution law taking values on the manifold, and it is cast as one of optimal estimation in a reproducing kernel Hilbert space (RKHS). A new notion of persistency of excitation (PE) is defined for the estimation problem over the manifold, and rates of convergence of the continuous time estimates are derived using the PE condition.  We discuss and analyze two approximation methods of the exact regression solution. We then conclude the paper with some numerical simulations that illustrate the qualitative character of  the computed function estimates. Numerical results from function estimates generated over a trajectory of the Lorenz system are presented. Additionally, we analyze an implementation of the two approximation methods using motion capture data.
\end{abstract}

%\begin{IEEEkeywords}
%regression, learning theory, persistence of excitation
%\end{IEEEkeywords}

\section{Introduction}
\subsection{Motivation}
The study of machine or statistical learning theory, and its application to regression problems, has been a topic of interest for years. \cite{gyorfi}. These  techniques have had a lasting impact in Bayesian estimation and estimation using Gaussian processes. Many of  these efforts in machine or statistical learning theory, Bayesian estimation, and Gaussian processes have theoretical foundations that exploit formulations cast in terms  of reproducing kernel Hilbert spaces (RKHS), which are also known as native spaces.  \cite{williams} While some recent efforts including \cite{qw2021,foster2020,theodorou2019} have sought to further understand learning theory in the context of dynamical systems theory, and vice-versa, it is accurate to say that most of the above work to date has focused on cases where the samples used for learning or regression are generated from some independent and identically distributed (IID), stochastic,  discrete measurement process.   A good account on the state-of-the-art in distribution-free learning theory and its focus on discrete processes can  be found in \cite{gyorfi,williams,scholkopf}. 

This paper seeks to use RKHS formulations in continuous time estimation  problems, in the spirit of \cite{sastryBook,krsticBook,ioannouBook,narendraBook,farrellBook,hovakimyanBook},  to achieve some of the advantages  that are so clear in the above applications of  learning theory to  IID discrete systems. The theory and algorithms in the references \cite{sastryBook,krsticBook,ioannouBook,narendraBook,farrellBook,hovakimyanBook} describe many of the working tools used by specialists in the field of adaptive estimation  and control theory as it is applied to ordinary differential equations (ODEs). As described in \cite{sastryBook,krsticBook,ioannouBook,narendraBook,farrellBook,hovakimyanBook}, it is standard that the regression problem in finite dimensional spaces is often used to motivate, explain, and study  adaptive estimation and control theory for ODEs. The regression problem arises then when the  ODE is characterized by a finite linear combination of known regressor functions.  In recent papers, the authors have introduced adaptive estimation problems in RKHS formulations in \cite{paruchuri2022sufficient,kgp2019limit,gpk2020pe,guo2020approx,paruchuri2022kernel,kepler}, where the evolution is described by a distributed parameter system (DPS) over a native space.  Here we study the related regression problem in continuous time in a native space, which plays an analogous role in the RKHS/DPS framework  to that when a finite dimensional collection of regressors appear in an ODE. 

One way to view this paper is as an exploration of what features or properties of the well-studied regression problem that underlies adaptive estimation and control of ODEs also hold, or can be extended to,  the regression problem  over a native space. This paper is also an attempt to address some of the open questions summarized in \cite{qw2021}, for instance, that relate learning theory and  dynamical systems theory. 

\subsection{Problem Description}
In this paper we study a regression problem in continuous time where an approximating agent traverses the configuration space $X$ along a trajectory $t\mapsto \phi(t)\in X$ making observations $y(t) = G(x(t))\in \RR$ of some unknown function $G:X\rightarrow\RR$. The configuration space $X$ is always a complete metric space, but it need not be compact.  The most important cases discussed in the paper choose $X:= \RR^d$ or some other smooth Riemannian manifold.  The system that generates the trajectory can be quite general. Any generally nonlinear autonomous or nonautonomous system may generate the trajectory $t \mapsto \phi(t) \in X$. We only require that the trajectory is continuous.

In this paper, we assume that the input/output history 
$\{x(\tau),y(\tau)\}_{\tau \in [0,t]}$ is observed without noise.   We concentrate on this paper on how the choice of a function space and geometric properties of the flow influence the rate of convergence of approximations of the solution of the regression problem in continuous time, which is challenging enough for a single paper.    We address the effects of uncertainty in the continuous time regression problem using the theory of inverse problems in a forthcoming paper.

While the flow is defined on $X$, which may not be compact, approximations of the regression problem will be carried out over some typically compact subset $S \subseteq X$. Two interpretations of the set $S \subseteq X$ are possible. It may be that the compact set $S$ is some known,  prescribed subdomain over which approximations are sought. In the most difficult problem setting, however, the set $S$ is not known {\em a priori} but represents an emergent structure. Over time, samples along the trajectory  accumulate in $S \subseteq X$. As time progresses, we obtain more and more information about the structure of $S$, but initially we may not have any idea about its structure. In the problem at hand, it can be the case that $S$ is highly irregular. 

Two examples are typical of the abstract situation above. In the first, $X := \RR^d$ and $S \subseteq X$ is some compact subset. In the numerical examples in Section \ref{sec:examples}, the Lorenz system is of this type. We have $X =\RR^3$ and $S \subset \RR^3$ is an irregular, unknown, compact, positively invariant set. 

Another important example arises when $S \subseteq \mathcal{M}$ and $\mathcal{M}$ is a compact, smooth, Riemannian manifold that is regularly embedded in $X$. Of course, if $X$ is compact, it is always possible to choose $S\subseteq \calM\equiv X$. We emphasize that we  reserve the notation $\calM$ for a compact manifold in this paper.  Again, in the most difficult form of this problem, the manifold $\mathcal{M}$ may not be known {\em a priori}. That is, we may not have explicit  knowledge of the specific coordinate charts that define $\mathcal{M}$, but rather only that it  is embedded in the larger manifold  $X$, $\mathcal{M} \subseteq X$. 

One important underlying goal should be clear in view of the comments above describing $S,\calM,X$. In a sense, we seek estimation methods that are robust  with respect to  uncertainty in the knowledge about the underlying unknown subset or manifold supporting the dynamics. The assumption where the form of $\mathcal{M}$ is unknown has recently been studied by the authors in \cite{powell2022koopman} when seeking data-dependent approximations of the Koopman operator. 

Other  progress on a related problem using online, gradient learning laws has been reported by the authors in \cite{pgk2020suff,kgp2019limit,gpk2020pe,gpk2020approx}. In these papers the method of native space or RKHS embedding is used to generate online estimates in applications to adaptive estimation and control theory.  Here in contrast we study an offline, optimal estimation approach. In comparison to the now familiar approaches for parametric estimation in finite dimensional Euclidean spaces, this paper derives convergence results for regression estimates in a reproducing kernel Hilbert space $\sH$ that is generated by a known, admissible kernel $\knl:X\times X \rightarrow \RR$.  The native space $\sH$ can be interpreted as the Hilbert space that contains all functions that can be represented as the limit of the translates of a certain template function. Given the kernel $\knl$ that defines $\sH$, we define the kernel section or basis function centered at $x$ as $\knl_x(\cdot) := \knl(x,\cdot)$. Then, $\sH$ is defined to be $\sH = \overline{\text{span}\{\knl_x \ | \ x \in X\}}$ the closed linear space as the kernel basis moves around in $X$. This paper can be viewed as the extension of standard results in Euclidean spaces as in \cite{sastryBook,krsticBook,ioannouBook}, see page 48 of  \cite{sastryBook} for Chapter 4 of \cite{ioannouBook} for instance,  to the case when an agent generates estimates in continuous time of a function in the native space $\sH$ defined over a subset $S$ manifold of a manifold $\calM$ or $X$. 

A primary contribution of this paper is the characterization of the error in continuous time using methods from scattered data approximation in kernel spaces.     Another contribution is the introduction of a new PE condition that is well-defined over manifolds and that enables the analysis of convergence of the time-varying regression estimate. We review these contributions in some detail next.

\subsection{Summary of New Results}
\label{sec:resultssummary}

There are three specific new results derived in this paper that are not addressed in any of the previous papers by the authors in \cite{pgk2020suff,kgp2019limit,gpk2020pe,gpk2020approx,powell2022koopman}, or in the literature at large.  Suppose that $\phi: t \mapsto \phi(t) \in X$ is a trajectory of either an autonomous or nonautonomous flow on the manifold. The regression problem described above is solved using the operator $T_\phi(s,t):\sH \rightarrow  \sH$
\[
T_\phi(s,t):=\int_s^t \knl_{\phi(\tau)}\otimes \knl_{\phi(\tau)} \nu(d\tau).
\]
where $\knl_{\phi(\tau)}:=\knl(\phi(\tau),\cdot)$ is the kernel basis function centered at $\phi(\tau)$ and $\nu$ is a finite measure on $[s,t]$. The tensor product operator $\knl_{\phi(\tau)}\otimes \knl_{\phi(\tau)} $ satisfies $\knl_{\phi(\tau)}\otimes \knl_{\phi(\tau)} g = \knl_{\phi(\tau)}\langle \knl_{\phi(\tau)}g\rangle_\sH$ for all $g \in \sH$. The first new result is summarized in Theorem \ref{th:Tphi} where sufficient conditions are given that ensure that this operator is compact, positive, and self-adjoint. This generalizes a result in the Appendix in \cite{devito2014} to the time-dependent case, which is essential to the study of the regression problem in continuous time.
The second new result is the introduction of a new persistency condition in 
Equation \ref{eq:PE} for flows over a manifold that generalizes the one in our earlier papers. It defines persistency for a general closed subspace $\sV\subseteq \sH$, where the norm on $\sV$ that can be different than the norm on $\sH$. The  publications \cite{begklpp2022,pgk2020center} always make the special choice $\sV:=\sH_S$ where $\sH_S$ is the native space generated by a subset $S\subset X$. The generalization in this paper is essential to prove convergence of estimates in certain spectral approximation spaces $A^r \subseteq \sH$, which depend on a trajectory $t\mapsto \phi(t)\in X$.

Finally, when the new PE condition holds for the subspace $\sV\subseteq \sH$, we show that 
\[
\|\hat{g}_\sV(t,\cdot)-\Pi_\sV G\|_{\sH} \leq \frac{\bar{\knl}^2 m \Delta}{\gamma_1 m + \gamma}\|(I-\Pi_{\sV} ) G\|_\sH+\frac{\gamma}{\gamma_1 m + \gamma}\|\Pi_\sV G\|_\sH
\]
where $t\mapsto \hat{g}_\sV(t,\cdot) \in \sV$ is the optimal solution of the (offline) regression problem, and $\Pi_\sV$ is the $\sH$-orthogonal projection of $\sH$ onto $\sV$. The constant $\bar{\knl}$ is a bound on the reproducing kernel $\knl$ that defines $\sH$, the constants $\gamma_1$ and $\Delta$ arise in the PE condition in Equation \ref{eq:PE}, $\gamma$ is the regularization parameter in the continuous regression error functional, and the time $t:=m\Delta$ for the positive integer $m > 0$. Note that as time $t = m\Delta \to \infty$, the estimate above implies that $$\| \hat{g}_{\sV}(t,\cdot)-\Pi_{\sV} G\|_{\sH} \lesssim  \mathcal{O}\biggl(\|(I-\Pi_{\sH}) G\|_{\sH}\biggr).$$ Intuitively, the solution of the regression problem in continuous time under the new PE condition implies that it asymptotically approaches the projection over the PE subspace.  We further refine this estimate in some cases to show that, when $N$ samples are used to define certain finite dimensional spaces of approximants $\sV:=\sH_N$ and these spaces are PE,  we have 
\begin{align*}
 \|&\hat{g}_N(t,\cdot)-\Pi_N G\|_{\sH_S}  \\ &\leq \left(\left (\frac{\bar{\knl}^2 m \Delta}{\gamma_1 m + \gamma} \right )
\|\calP_N\|_{L^2(S)}\right)\|G\|_{\sH_S} +  \frac{\gamma}{\gamma_1 m + \gamma}\|\Pi_N G\|_{\sH_S}. 
\end{align*}
for all unknown functions $G$ that are smooth enough. This bound makes use of the power function $\calP_N(x)$, over the set $S$, that is defined as 
\[
\calP_N(x):=|\knl(x,x)-\knl_N(x,x)| \quad \text{ for all } x\in S.
\]
In this expression $\knl_N$ is the kernel that defines the native space $\sH_N$ of approximants, and $S$ is the closure of the trajectory $\tau\mapsto \phi(\tau)$ in $X$. The kernel $\knl_N(x,y) := (\Pi_N k_x)(y)$ by definition \cite{berlinet}.   It is worth noting that this  error bound for the regression problem in continuous time has some similarity to that in \cite{gao2019gaussian, bai2016bayesian}. These papers derive pointwise error bounds for discrete regression or Bayesian estimation for discrete time processes, in contrast to the integrated error bound above for systems in continuous time. The relationship of the solution of the continuous time regression problem to the more familiar discrete IID, stochastic, case is discussed in detail in Section \ref{sec:learningTheory}.

\subsection{Notation, Symbols, and Background}
\label{sec:notation}
In this paper the state space $X$  is a complete metric space. The most important examples choose X to be the Euclidean space $X:=\RR^d$, a smooth and compact Riemannian manifold $X:=\mathcal{M}$, or certain measurable subsets of these. 
We denote by $\knl:X\times X\rightarrow \RR$ a   symmetric, nonnegative, continuous kernel that induces the scalar-valued native space $\sH$ of functions defined over $X$.
Throughout the paper  $\sH$ is a reproducing kernel Hilbert space (RKHS) of real-valued functions over the set $X$ that is given by $\sH:=\overline{\text{span}\{\knl(x,\cdot)\ | \ x\in X\}}$.

In this paper, we often must refer to time-varying functions that take values in $\sH$. We write $f(t,\cdot)$ to represent the spatial function $x \mapsto f(t,x)$ for fixed time $t$. That is $f(t,\cdot)$ for each fixed time $t$.

We write $\EE_X:\sH \to \RR$ for the evaluation functional at $x \in X$, which satisfies $\EE_x f := f(x)$ for each $f \in \sH$. The adjoint $\EE_x^*:\RR \to \sH$ can be understood as the multiplication operator given by $\EE_x^* \alpha := \knl(x,\cdot)\alpha $ for all $\alpha \in \RR$. {We denote by $\calL(\sH)$ the linear and bounded operators that map from $\sH$ to $\sH$. The notation $\calB_{\calL(\sH)}$ denotes the Borel $\sigma$-algebra on $\calL(\sH)$.} 

For any subset $S\subseteq X$ we define the native space $\sH_S\subseteq \sH$ generated by  $S$ as $\sH_S:=\overline{\text{span}\{\knl(x,\cdot)\ | \ x\in S\}}$. 
We emphasize that $\sH_S$ is {not} the Hilbert space that consists of  restrictions of functions to $S$: since $\sH_S\subseteq \sH$ functions in $\sH_S$ are supported on $X$. 
The space $\sH_S$ is a native space having kernel $\knl_S(x,y):=\langle\Pi_S \knl(x,\cdot),\Pi_S(\knl(y,\cdot)\rangle_\sH$ for all $x,y\in X$,  with $\Pi_S$ the $\sH$-orthogonal projection of $\sH$ onto $\sH_S$.  

We denote by $\Res_S$ the trace or restriction operator $\Res_S g:=g|_{S}$. The space of restrictions $\sR_S=\Res_S(\sH)$ is an RKHS with the restricted kernel $\rnl(x,y):=\knl(x,y)$ for all $x,y\in S\subseteq X$.  There is a canonical minimum norm extension operator $\Ext_S:\rH_S\rightarrow \sH$ that satisfies $\Ext_S \Res_S =\Pi_S$.  This operator is an isometry $\Ext_S:\sR_S \rightarrow \sH_S$ and satisfies 
$$
\|\Ext_S g\|_{\sH}= \|g\|_{\sR_S}=\inf \left \{ \|f\|_{\sH} \ | \ g=\Res_S f, f\in \sH  \right \}
$$
for all $g\in \sR_S$.

\begin{comment}
A brief review of some of the basic properties of an RKHS is given in Section \ref{sec:RKHS} in the Appendix. 
\end{comment} 

\section{Regression in Continuous Time}
\label{sec:regresscontinuous}
In this section we study in detail the problem of regression in continuous time in a RKHS $\sH$ of real-valued functions over the state space $X$, where $X$ is a complete metric space. The trajectory $t \mapsto \phi(t)$ of the system is assumed to be continuous. The overall situation is depicted graphically in Figure \ref{fig:continuousregression}.  In this problem we are given a trajectory $t\mapsto \phi(t)\in X$, and we make measurements $y(\tau)=G(\phi(\tau))$ of an unknown function $G\in \sH$ at $\phi(\tau)\in X$ for each $\tau\in [0,t]$. 
\begin{figure}[ht!]
    \centering
    \includegraphics[width=0.8\textwidth]{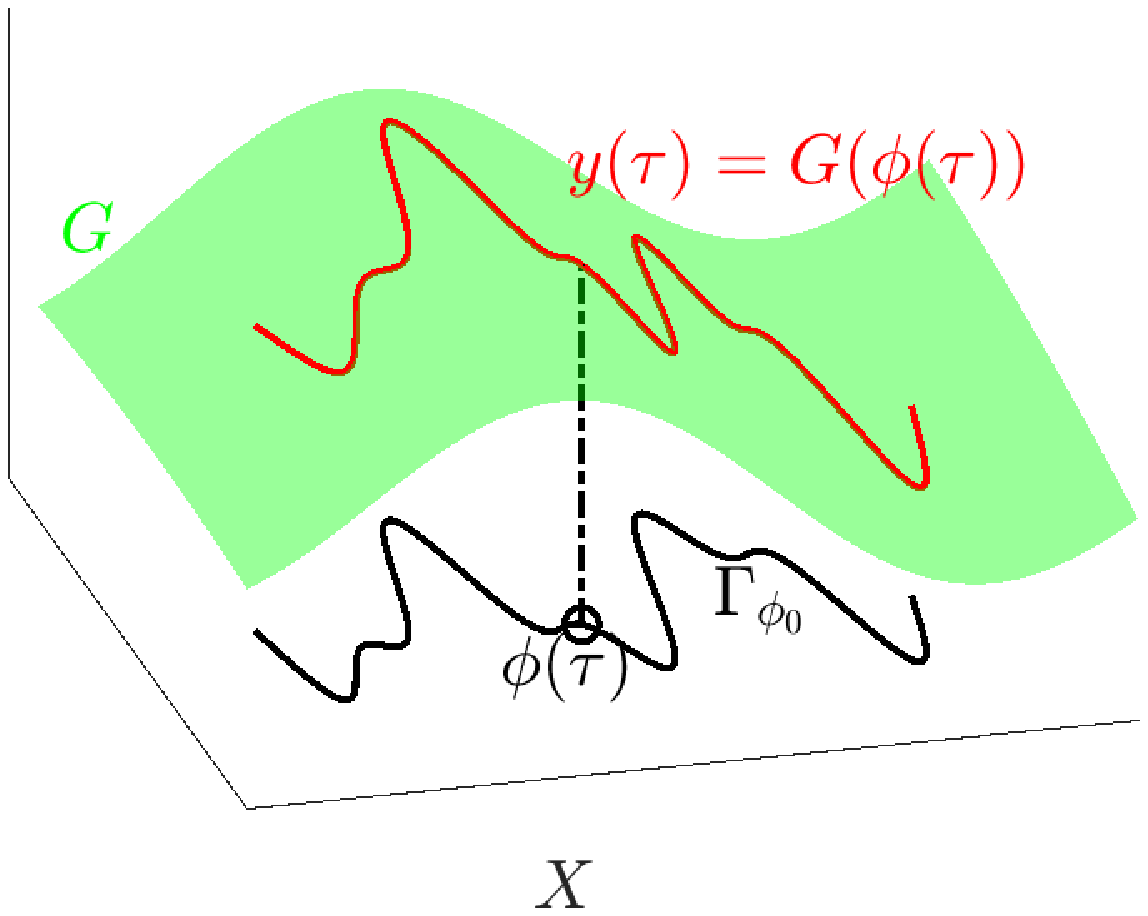}
    \includegraphics[width=0.8\textwidth]{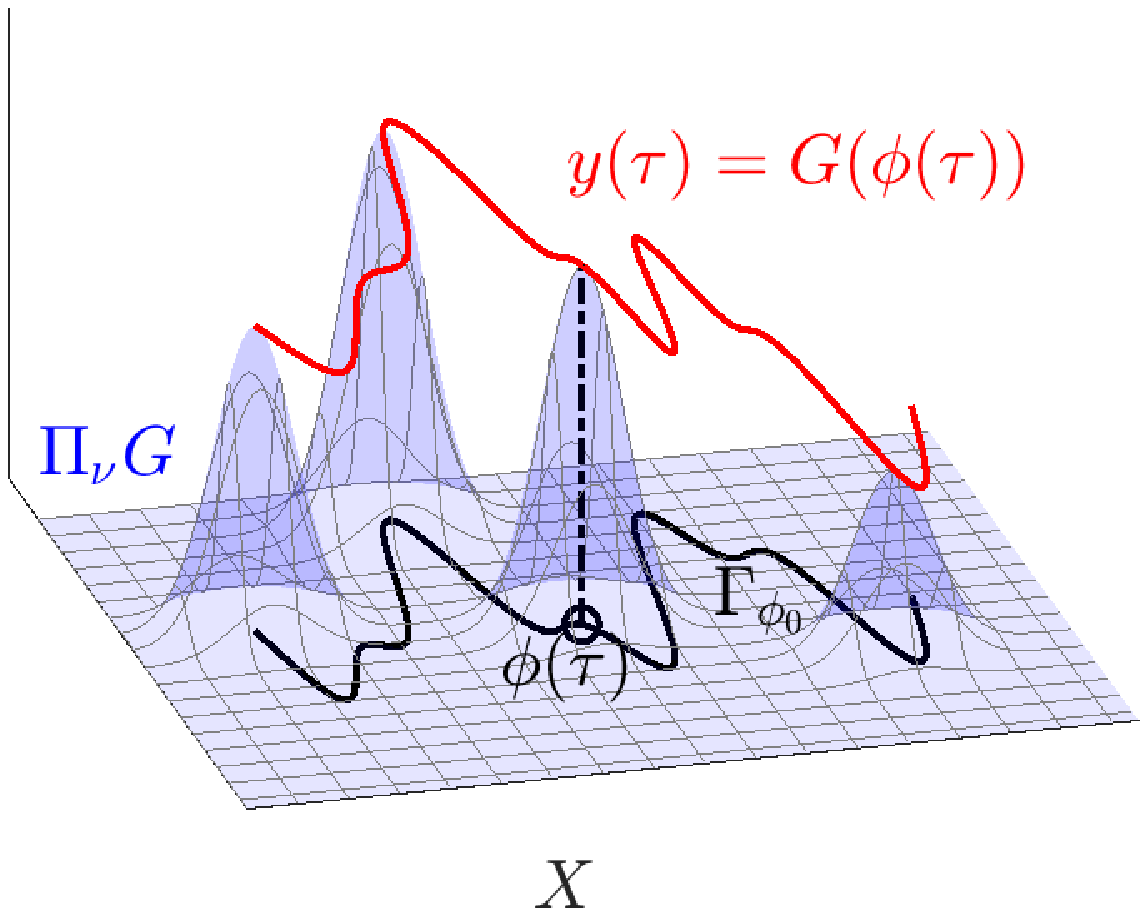}
    \caption{(Top) An illustration of regression in continuous time. The orbit $\Gamma(\phi_0) = \bigcup_{\tau \geq 0} \phi(\tau)$ is a subset of the state space $X$. The output is determined by a function $G$ represented by the green surface over $X$. (Bottom) An illustration of the estimate $\Pi_\nu G$ represented by the blue mesh is generated by a subspace $\sV$ of $\sH$. Here the subspace consists of kernel functions illustrated by the ``bumps" with centers at various points along the orbit. In this figure, one center is represented by $\phi(\tau)$.}
    \label{fig:continuousregression}
\end{figure}
The goal is to use the continuous collection of samples $\{(\phi(\tau),y(\tau))\}_{\tau\in [0,t]}$ to build a time-varying estimate $\hat{g}(t,\cdot)\in \sH$ of the unknown function $G$. 

\subsection{The Offline Optimal Regression Estimate in an RKHS} 
\label{sec:offlineoptimal}
 We define the integral error functional $E:\RR^+\times \sH \times C([0,t],X)\rightarrow \RR$ to be 
\begin{equation*}
    E(t,g;\phi):=\frac{1}{2} \int_0^t |y(\tau)-\Ev_{\phi(\tau)}g|^2 \nu(d\tau) + \frac{1}{2}\gamma \|g\|_{\sH}^2
\end{equation*}
for the state space $X$, some measure $\nu$ on $\RR^+$, and a regularization parameter $\gamma>0$. In this equation $\EE_{\phi(\tau)}:\sH \to \RR$ is the evaluation functional at $\phi(\tau)$, which satisfies $\EE_{\phi(\tau)}g = g(\phi(\tau)) $ for any $g \in \sH$. The error functional $E(t,g;\phi)$ can be rewritten as 
\begin{align*}
    E(&t,g;\phi)=\frac{1}{2} \int_0^t |\Ev_{\phi(\tau)}(G-g)|^2 \nu(d\tau) + \frac{1}{2} \gamma  \|g\|_{\sH}^2, \\
    &= \frac{1}{2}\int_0^t\left \langle  \Ev_{\phi(\tau)}^*\Ev_{\phi(\tau)}(G-g),G-g\right\rangle_{\sH}\nu(d\tau) + \frac{1}{2}\gamma \|g\|_{\sH}^2 .
\end{align*}
The adjoint $\EE^*_{\phi(\tau)}:\RR \to \sH$ is given by $\EE^*_{\phi(\tau)} \alpha := \knl_{\phi(\tau)}\alpha$ for any $\alpha \in \RR$.
The study of the error functional $E(t,g;\phi)$ takes a familiar structure when we introduce an  operator $T_{\phi}{(s,t)}:\sH\rightarrow \sH$ via the identity
\[
T_{\phi}{(s,t)} g:=\int_s^t \Ev_{\phi(\tau)}^*\Ev_{\phi(\tau)}g\nu(d\tau). 
\]
Note that $T_{\phi}{(s,t)}$ is a time-varying operator that depends on the trajectory $t\mapsto \phi(t)$. In the following arguments, and later at several places in the text, the properties of the operator $T_\phi(s,t)$ are important. We summarize some of its properties  in the following theorem. 
\begin{theorem}
\label{th:Tphi}
Suppose that $\knl:X\times X\rightarrow \RR$ is a continuous admissible kernel that induces the native space $\sH$ of continuous real-valued functions over $X$. The operator $T_\phi(s,t)$ above is an integral operator 
\begin{align*}
(T_{\phi}{(s,t)} g)(\xi)&=\int_s^t \knl(\xi,\phi(\tau))\langle\knl_{\phi(\tau)},g\rangle_\sH \nu(d\tau)\\
&=\int_s^t \knl(\xi,\phi(\tau))g(\phi(\tau)) \nu(d\tau).
\end{align*}
If there is a constant $\bar{\knl}>0$ such that $\sqrt{\knl(x,x)}\leq \bar{\knl}$ for all $x\in X$   and the trajectory $\phi\in C([0,t],X)$ is continuous in time, then the operator-valued map 
\[
\tau \mapsto \knl_{\phi(\tau)}\otimes \knl_{\phi(\tau)} \in \calL(\sH)
\]
is continuous from $\TT:=[s,t]$ to $\calL(\sH)$ and measurable as a map from $(\TT,\calB_\TT)$ into $(\sH,\calB_{\calL(\sH)})$. The operator $T_\phi(s,t)$ can be understood as the Bochner integral
\begin{align}
    T_{\phi}(s,t)=\int_\TT  \knl_{\phi(\tau)}\otimes \knl_{\phi(\tau)} \nu(d\tau) = \int_\TT \Ev_{\phi(\tau)}^*\Ev_{\phi(\tau)}\nu(d\tau). \label{eq:bochnerT}
\end{align}
The operator $T_\phi(s,t)$ is compact, self-adjoint, positive, and trace class. 
\end{theorem}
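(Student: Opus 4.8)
The plan is to verify the four properties in an order that lets the later ones lean on the earlier ones, handling self-adjointness and positivity by direct inner-product computations and then obtaining compactness as a byproduct of the trace-class estimate. First I would record the two elementary facts. For self-adjointness, each integrand $\knl_{\phi(\tau)}\otimes\knl_{\phi(\tau)}=\Ev_{\phi(\tau)}^*\Ev_{\phi(\tau)}$ is manifestly self-adjoint, being of the form $A^*A$; since taking adjoints is a bounded linear operation on $\calL(\sH)$, it commutes with the Bochner integral \eqref{eq:bochnerT}, whence $T_\phi(s,t)^*=T_\phi(s,t)$. For positivity I would compute, for arbitrary $g\in\sH$,
\[
\langle T_\phi(s,t)g,g\rangle_\sH=\int_s^t\langle\Ev_{\phi(\tau)}^*\Ev_{\phi(\tau)}g,g\rangle_\sH\,\nu(d\tau)=\int_s^t|g(\phi(\tau))|^2\,\nu(d\tau)\ge 0,
\]
where the middle equality is the reproducing property $\Ev_{\phi(\tau)}g=g(\phi(\tau))$. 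Pulling the scalar product through the integral is legitimate because $\langle\,\cdot\,,g\rangle_\sH$ is a bounded functional and the integrand is Bochner integrable, a fact the theorem has already secured.

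The crux is the trace-class property, from which compactness follows for free. Because $T_\phi(s,t)$ is now known to be positive and self-adjoint, it suffices to show $\sum_j\langle T_\phi(s,t)e_j,e_j\rangle_\sH<\infty$ for one orthonormal basis $\{e_j\}$ of $\sH$, and this quantity then equals the trace independently of the basis. I would write
\[
\sum_j\langle T_\phi(s,t)e_j,e_j\rangle_\sH=\sum_j\int_s^t|e_j(\phi(\tau))|^2\,\nu(d\tau)=\int_s^t\sum_j|e_j(\phi(\tau))|^2\,\nu(d\tau),
\]
where the interchange of sum and integral is permitted by nonnegativity of every term (Tonelli, equivalently monotone convergence). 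The inner sum is evaluated by Parseval: since $e_j(x)=\langle e_j,\knl_x\rangle_\sH$, we have $\sum_j|e_j(x)|^2=\|\knl_x\|_\sH^2=\knl(x,x)$. Hence
\[
\mathrm{Tr}\,T_\phi(s,t)=\int_s^t\knl(\phi(\tau),\phi(\tau))\,\nu(d\tau)\le\bar{\knl}^2\,\nu([s,t])<\infty,
\]
using the hypothesis $\knl(x,x)\le\bar{\knl}^2$ together with finiteness of $\nu$. This establishes that $T_\phi(s,t)$ is trace class, and since every trace-class operator is compact (being in particular Hilbert--Schmidt), compactness is immediate.

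I expect the main obstacle to be the rigorous bookkeeping behind the trace computation rather than the formal manipulation itself: one must confirm that basis-independence of the trace is available (guaranteed here by the prior positivity step), that Tonelli genuinely applies (guaranteed by nonnegativity of $|e_j(\phi(\tau))|^2$), and that the Parseval identity is valid in $\sH$, i.e. that $\{e_j\}$ is an honest orthonormal basis so that $\sum_j|\langle e_j,\knl_x\rangle_\sH|^2=\|\knl_x\|_\sH^2$. As a cross-check on compactness that sidesteps the trace entirely, one may instead approximate the Bochner integral in operator norm by Riemann sums: each such sum is a finite linear combination of the rank-one operators $\knl_{\phi(\tau_i)}\otimes\knl_{\phi(\tau_i)}$, hence finite rank, and since the compact operators form a norm-closed subspace of $\calL(\sH)$, the norm limit $T_\phi(s,t)$ is compact. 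This alternative uses only the continuity and boundedness of $\tau\mapsto\knl_{\phi(\tau)}\otimes\knl_{\phi(\tau)}$ already furnished in the statement.
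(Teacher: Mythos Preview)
Your argument for the four operator-theoretic properties is correct, and it differs from the paper's in two places worth noting. For positivity, the paper discretizes the time interval and invokes positive semidefiniteness of the Gram matrix $[\knl(\xi_{N,i},\xi_{N,j})]$ to bound a double Riemann sum from below; your direct computation $\langle T_\phi(s,t)g,g\rangle_\sH=\int_s^t|g(\phi(\tau))|^2\,\nu(d\tau)\ge 0$ is both shorter and more transparent, since it uses only the reproducing property and the fact that bounded functionals pass through Bochner integrals. For the trace-class property, the paper argues that each rank-one integrand has trace $\knl(\phi(\tau),\phi(\tau))$ and then pushes the trace through the Bochner integral by appealing to continuity of the trace functional on the trace-class ideal; you instead expand in an orthonormal basis, interchange sum and integral via Tonelli, and close with Parseval. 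Both arrive at the same bound $\bar{\knl}^2\nu([s,t])$; the paper's route is slightly more abstract (it needs the Bochner integral to land in the trace-class ideal, not just in $\calL(\sH)$, for the trace to commute), while yours is more hands-on but requires the preliminary positivity step to justify that $\sum_j\langle Te_j,e_j\rangle<\infty$ implies trace class. You also explicitly verify self-adjointness, which the paper leaves implicit.

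One caveat: the theorem as stated also asserts continuity and measurability of $\tau\mapsto\knl_{\phi(\tau)}\otimes\knl_{\phi(\tau)}$ and the existence of the Bochner integral itself. You invoke these as ``already secured,'' which is fine if you are treating them as proved in the earlier lines of the statement, but the paper does supply a short argument for them (via $\|\knl_{\phi(\tau)}-\knl_{\phi(t)}\|_\sH^2=\knl(\phi(\tau),\phi(\tau))-2\knl(\phi(\tau),\phi(t))+\knl(\phi(t),\phi(t))$ and a triangle-inequality splitting for the tensor product). If your intent is a full proof of the theorem, those steps should be included.
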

\begin{proof}
Since $\knl$ is continuous and the trajectory $\tau\mapsto \phi(\tau)$ is continuous, the map $\tau\mapsto \knl_{\phi(\tau)}$ is continuous. This fact follows from the identity 
\begin{align*}
    \|\knl_{\phi(\tau)}-\knl_{\phi(t)}\|^2_{\sH} :=\knl(\phi(\tau),&\phi(\tau))-2\knl(\phi(\tau),\phi(t))\\
    &+\knl(\phi(t),\phi(t)),
\end{align*}
and the righthand side goes to zero as $\tau\rightarrow t$ by the continuity of $\tau\mapsto \knl(\phi(\tau),\phi(\tau))$.
This identity can then be used to show that the curve $\tau\mapsto \knl_{\phi(\tau)}\otimes \knl_{\phi(\tau)}$ is continuous as a map from $\TT \rightarrow \calL(\sH)$ from the expression
\begin{align*}
    &\left \|\left (\knl_{\phi(\tau)}\otimes \knl_{\phi(\tau)} - \knl_{\phi(t)}\otimes \knl_{\phi(t)}
   \right )h \right \| \\
   &\leq 
   \left \| \knl_{\phi(\tau)}\otimes \knl_{\phi(\tau)}h- \knl_{\phi(\tau)}\otimes \knl_{\phi(t)}h\right \| \\
   &\hspace*{.5in}+ \left \| \knl_{\phi(\tau)}\otimes \knl_{\phi(t)}h- \knl_{\phi(t)}\otimes \knl_{\phi(t)}h\right \|, \\
   & \leq 2\bar{\knl} \|h\| \| \knl_{\phi(\tau)}-\knl_{\phi(t)}\|.
\end{align*}
This collection of inequalities above makes repeated us of the bound $\|\knl_x\|\leq \bar{\knl}$. 
The measurability of the map from $(\TT,\calB_\TT)$ to $(\sH,\calB_{\calL(\sH)})$ then follows from the continuity of this map. 
Finally, the fact that the Bochner integral in Equation \ref{eq:bochnerT} exists in $\calL(\sH)$ is a consequence of the bound
\begin{align*}
    \int_\TT \|\knl_{\phi(\tau)}\otimes \knl_{\phi(\tau)}\|_{\calL(\sH)} \nu(d\tau) \leq \bar{\knl}^2 \nu(\TT).
\end{align*}

Next, we consider the compactness of $T_\phi(s,t)$. This proof essentially follows the same line of reasoning as that in Proposition 14 of \cite{devito2014}, which is carried out for the operator $\knl_x\otimes \knl_x$ and a {\em spatial measure $\mu$} on $X$. Since the operator $\knl_{\phi(\tau)}\otimes \knl_{\phi(\tau)}$ is finite rank for each $\tau$, it is trace class for each $\tau$, with
\[
\text{Tr}\left ( \knl_{\phi(\tau)}\otimes \knl_{\phi(\tau)} \right ) =\knl(\phi(\tau),\phi(\tau))\leq \bar{\knl}^2. 
\]
The trace operator is a continuous linear operator on the trace norm class, and we know that 
\begin{align*}
    \text{Tr} \left (\int_\TT \knl_{\phi(\tau)}\otimes \knl_{\phi(\tau)}  \nu(d\tau)\right )&= \int_\TT \text{Tr} \left ( \knl_{\phi(\tau)}\otimes \knl_{\phi(\tau)}\right ) \nu(d\tau)\\
    &\leq {\bar{\knl}^2} \nu(\TT)
\end{align*}
by the mapping property of a Bochner integral under continuous linear operators. The Bochner integral exists and is therefore trace class. 

We conclude this proof by showing that $T_\phi(s,t)$ is a positive operator. This is a modification of the analysis in \cite{cucker}, which treats a different problem where again the integral is over space, not time.  For completeness, we give its outline. Let $\TT_{N,k}$ be a family of measurable subsets of $\TT$ with $\TT=\cup_{k=1}^N\TT_{N,k}$ and $\nu(\TT_{N,k})= \nu(\TT)/N$.  Fix a quadrature point $t_{N,k}\in \TT_{N,k}$ from each set $\TT_{N,k}$ and define $\xi_{N,k}=\phi(t_{N,k})$. We then have 
{\small
\begin{align}
    &\left (T_\phi(s,t)h,h\right )= \notag \\
    &\lim_{N\rightarrow \infty} \sum_{i,j=1}^N \int_{\TT}\int_\TT \knl(\xi_{N,j},\xi_{N,i})h(\xi_{N,i})h(\xi_{N,j})\chi_{{N,i}}(\tau) \chi_{{N,j}}(\tau) \nu(d\tau) \notag  \\
    &=\lim_{N\rightarrow \infty} 
    \frac{1}{N^2} \sum_{i,j=1}^N\knl(\xi_{N,j},\xi_{N,i})h(\xi_{N,i})h(\xi_{N,j})\geq 0, \label{eq:lowerbound}
\end{align}
}

\noindent where $\chi_{N,i}$ is characteristic function of the subset $\TT_{N,i}$. 
The positivity  of $T_\phi(s,t)$ follows from the semidefiniteness of the kernel $\knl$. 
\end{proof}
With these properties in hand, we can derive the optimal offline regression estimate in continuous time. In the usual way we can ``complete the square'' and write
\begin{align*}
E(t,g;\phi)&=\frac{1}{2}\left\langle (T_{\phi}(0,t)+\gamma I)g,g\right\rangle_{\sH} - \left \langle T_{\phi}(0,t)G,g\right \rangle_{\sH} \\
& + \frac{1}{2}\left \langle T_{\phi}(0,t)G,G)\right\rangle_{\sH}.
\end{align*}
Now we define the best approximation, i.e. the regressor solution in continuous time, 
\[
\hat{g}(t,\phi):=\underset{g\in \sH}{\text{argmin}}\ E(t,g;\phi).
\]
But it is relatively easy to calculate the Gateaux derivative of this functional in the direction $h\in \sH$. By definition  it  is given by
\begin{align*}
\left \langle 
DE(t,g;\phi),h 
\right \rangle_{\sH}
&:=\lim_{\epsilon \rightarrow 0} \frac{E(t,g+\epsilon h;\phi)-E(t,g;\phi)}{\epsilon} \\
&=\left \langle
(T_\phi(0,t)+\gamma I)g-T_\phi(0,t)G,h 
\right \rangle_{\sH}.
\end{align*}
Local minima to the above minimization problem must satisfy 
\[
\hat{g}(t,\cdot)=\left ( 
T_\phi(0,t)+\gamma I
\right )^{-1}T_\phi(0,t)G,
\]
because the operator $T_\phi(0,t)+\gamma I$ is invertible as a map from $\sH\rightarrow \sH$.

\subsection{Galerkin Approximations of the Regression Estimate}
\label{sec:galerkinapprox}
The regression estimate $\hat{g}(t,\cdot)$ is the solution of an operator equation in the generally infinite dimensional space $\sH$. Practical algorithms must consider approximations of this solution. In this section we discuss one method for obtaining approximations based on Galerkin's method. 
A review of some of the common properties of Galerkin approximations in Hilbert spaces is given in the Appendix in Section \ref{sec:galerkinbackground}. The regression solution $\hat{g}(t,\cdot)$ satisfies the equation 
\begin{align}
    \left \langle \left ( T_\phi(0,t)+\gamma I \right )\hat{g}(t,\cdot),h\right \rangle_{\sH} &= \left \langle T_\phi(0,t)G,h \right \rangle_{\sH} \quad \text{ for all } h\in \sH.
    \label{eq:weakeq}
\end{align}
Let $\sH_N\subseteq \sH$ be some $N$-dimensional subspace of $\sH$ that is used to build approximations. The  Galerkin approximation $\hat{g}_N(t,\cdot)\in \sH_N$ is the solution of the analogous equation 
\begin{align}
    \left \langle \left ( T_\phi(0,t)+\gamma I \right )\hat{g}_N(t,\cdot),h_N\right \rangle_{\sH} &= \left \langle T_\phi(0,t)G,h_N \right \rangle_{\sH} \quad \text{ for all } h_N\in \sH_N. \label{eq:weakgalerkin}
\end{align}
For each $t\in \RR^+$ we define the bilinear form $a(t)(\cdot,\cdot):\sH\times \sH\rightarrow \RR$ to be $a(t)(g,h):=\left \langle \left ( T_\phi(0,t)+\gamma I \right )g,h\right \rangle_{\sH}$ for each $g,h\in \sH$. 
The bilinear form $a(t)(\cdot,\cdot)$ is bounded and coercive as defined in Section  \ref{sec:galerkinbackground} since 
\begin{align*}
    a(t)(g,h)&\leq (\bar{k}^2\nu([0,t]+\gamma)\|g\| \|h\|  & &\text{ for all } g,h \in \sH, \text{ and } \\
    a(t)(g,g)&\geq \gamma \|g\|^2  & & \text{ for all } g\in \sH.
\end{align*}
By the Lax-Milgram Theorem \ref{th:laxmilgram}
 there is a unique solution $\hat{g}(t,\cdot)$ of Equation \ref{eq:weakeq} and $\hat{g}_N(t,\cdot)$ of Equation \ref{eq:weakgalerkin}. From Theorem \ref{th:galerkinerror} we know that 
 \begin{align*}
     \|\hat{g}(t,\cdot) - \hat{g}_N(t,\cdot)\|\leq \frac{(\bar{k}^2\nu([0,t]) + \gamma)}{\gamma}\|(I-\Pi_N)G\|.
 \end{align*}

In the analysis so far, we have $\hat{g}(t,\cdot)\in \sH$, $\hat{g}_N(t,\cdot)\in \sH_N\subseteq \sH$, under the assumption that $G\in \sH$, with $\sH$ a native space of functions supported on the configuration space $X$. The  error in approximating the optimal regression estimate $\hat{g}(t,\cdot)$ by the Galerkin estimate $\hat{g}_N(t,\cdot)$ is bounded by the norm on the best approximation of $G$ from the subspace $\sH_N$, but there are a number of standard techniques  to build sharp bounds on the error $\|(I-\Pi_N)G\|$, which depend on how regular the function $G$ is. These methods can be based on spectral analysis of integral operators and Mercer kernels, properties of the power function, or versions of the many zeros theorems \cite{wendland,hangelbroek2012polyharmonic,fuselier2012}. We discuss such specific cases in the examples in Section \ref{sec:examples}.

%
%%  PE
%
\section{Persistency of Excitation (PE) in Native Spaces} 
\label{sec:PEnative}
The basic error estimate for the Galerkin approximation described in Section \ref{sec:galerkinapprox} can be refined in several ways. In this section, we show how introducing priors on $G$, which are certain assumptions that enforce restrictions or constraints on the unknown function, can yield improved error estimates. In analogy to the case of parametric estimation in Euclidean space, we introduce a persistency of excitation condition for flows over a manifold. The PE condition can be used to derive alternative terms of an error bound on Galerkin approximations. Define the  operator 
\[
T_\phi(t,t+\Delta):=\int_{t}^{t+\Delta}\Ev_{\phi(\tau)}^*\Ev_{\phi(\tau)}\nu(d\tau).
\]

\subsection{A New Persistence of Excitation Condition} 
\label{sec:newPE}
In \cite{gpk2020pe}, we say that a persistence of excitation condition holds  over the closed subspace $\sV\subseteq \sH$   if there exist constants $\gamma_1,\gamma_2,\Delta>0$ such that 
\begin{align*}
    \gamma_1 \|g\|_{\sV}^2
    \leq &\int_t^{t+\Delta}\left \langle\Ev_{\phi(\tau)}^*\Ev_{\phi(\tau)}g,g \right \rangle_{\sH} \nu(d\tau) \leq \gamma_2 \|g\|_{\sV}^2 ,
    \end{align*} 
    or in other words
 \begin{align}
    \gamma_1 \|g\|_{\sV}^2 \leq &\left \langle T_\phi(t,t+\Delta)g,g \right \rangle_{\sH} \leq \gamma_2 \|g\|_{\sV}^2, \label{eq:PE}
\end{align}
for all $t\in \RR^+$ and $g\in \sV$. Note that the above PE condition uses the operator as given in Equation \ref{eq:bochnerT}.
We analyze two different cases below:
\begin{enumerate}
    \item[(1)] The space $\sV\subseteq \sH$ is the native space $\sH_S$ generated by a subset $S\subseteq X$, 
    \begin{align*}
        \sV:=\sH_S:=\overline{\text{span}\{ \knl_{x} \ | \ x \in S \}}.
    \end{align*}
    Note that in this case $\sV := \sH_S$ equipped with the norm it inherits as a closed subspace of $\sH$.
    \item[(2)] The space $\sV$ is selected to be the closed subspace $A^s(\sH)$ that is defined in terms of a fixed, compact, self-adjoint, positive operator $T^s$ and its spectral decomposition. 
\end{enumerate}

\subsection{Persistency in $\sV:=\sH_S$ with $S \subseteq X$}

Note that, if the above PE condition in Equation \ref{eq:PE} holds, we obtain  upper and lower bounds on $T_\phi(0,t)+\gamma I$.  For simplicity, suppose  that $t=m\Delta$ for some integer $m\in \NN$. Then, if the PE condition holds for $\sV := \sH_S$, we know that 
\[
(\gamma_1 m + \gamma)\|g\|_{\sH}^2 \leq 
\left \langle(T_\phi(0,t)+\gamma I)g,g \right \rangle_{\sH} \leq (\gamma_2 m + \gamma) \|g\|_{\sH}^2 
\]
for all $g\in \sH_S$. This means that for all $g\in \sH_S$ we have the upper bound 
\[
\|(T_\phi(0,t)+\gamma I)^{-1}g\|_{\sH} \leq \frac{1}{\gamma_1 m + \gamma}\|g\|_{\sH}.
\]

\subsubsection{The Optimal Regressor $\hat{g}(t,\cdot)$ in $\sH$}
\label{sec:optimalH}
We can use the above bound to find an error bound for the best offline estimate $\hat{g}(t,\cdot) \in \sH$. Suppose that $\Pi_S:\sH \rightarrow \sH_S$ is the $\sH$-orthogonal projection onto the closed subspace $\sH_S$. We set 
\begin{align*}
    \Delta G&:=G-\Pi_S G=(I-\Pi_S)G, \\
    y(t)&=\Ev_{\phi(t)}G=(\Pi_S G)(\phi(t))+ \Delta G(\phi(t)).
\end{align*}
We define the error between the offline estimate $\hat{g}(t,\cdot)$ and $\Pi_S G$ to be $\tilde{g}(t,\cdot):=\hat{g}(t,\cdot)-\Pi_S G$. It follows that 
\begin{align*}
    &\tilde{g}(t,\cdot)=\left (T_\phi(0,t)+\gamma I \right )^{-1}T_\phi(0,t)G-\Pi_S G \\
    &=\left (T_\phi(0,t)+\gamma I \right )^{-1}T_\phi(0,t)\left (\Pi_S G+ \Delta G\right ) -\Pi_S G \\
    &=\left (T_\phi(0,t)+\gamma I \right )^{-1}
    \left ( \left (T_\phi(0,t)+\gamma I \right )\Pi_S G -\gamma \Pi_S G\right ) 
    \\ &\hspace*{.75in} + \left (T_\phi(0,t)+\gamma I \right )^{-1}T_\phi(0,t)\Delta G 
    -\Pi_S G\\
    &=\left (T_\phi(0,t)+\gamma I \right )^{-1}
    \biggl ( \Pi_S\left (T_\phi(0,t)\Delta G -\gamma  G \right ) \\
    &\hspace*{1.25in}+ (I-\Pi_S)T_\phi(0,t)\Delta G 
    \biggr ).
\end{align*}
Now we apply the bound on $(T_\phi(0,t)+\gamma I)^{-1}$, and we get
\begin{align*}
    \|&\tilde{g}(t,\cdot)\|_{\sH} \leq \|\left (T_\phi(0,t)+\gamma I \right )^{-1}\Pi_S T_\phi(0,t)\Delta G\|_{\sH} \\
    & \hspace*{.3in}+\gamma \| \left (T_\phi(0,t)+\gamma I \right )^{-1}\Pi_S G\|_{\sH} \\
    &\hspace*{.3in}  
    +\| \left (T_\phi(0,t)+\gamma I \right )^{-1}\| \|(I-\Pi_S)T_\phi(0,t)\Delta G\|_{\sH}
    \\
    &\leq \frac{1}{\gamma_1 m +\gamma} \|T_\phi(0,t)\| \| (I-\Pi_S)G\|_{\sH} \\
    &\hspace*{.25in} + \frac{\gamma}{\gamma_1 m + \gamma}\|\Pi_S G\|_{\sH} 
    + \frac{1}{\gamma}\|T_\phi(0,t)\| \|(I-\Pi_S)G\|_{\sH}.
\end{align*}
But, by virtue of $\sqrt{\knl(x,x)}\leq \bar{\knl}$, we know that $\|T_\phi(0,t)\|\leq \bar{\knl}^2 \nu([0,t])$. Assuming that $\nu$ is Lebesgue measure on $\RR^+$ and $t=m\Delta$, we obtain
\begin{align}
    \|\hat{g}(t,\cdot)-\Pi_S G\|_{\sH} &\leq \frac{\bar{\knl}^2 m \Delta}{\gamma_1 m + \gamma} \|(I-\Pi_S)G\|_{\sH} \notag \\
    &\hspace*{.25in}+ \frac{\gamma}{\gamma_1 m + \gamma} \|\Pi_S G\|_{\sH} \notag \\
    &+ \frac{1}{\gamma} \bar{\knl}^2m\Delta \|(I-\Pi_S)G\|_{\sH}. \label{eq:HSbound}
\end{align}
 In particular, if we have $(I-\Pi_S)G=0$, we  conclude that 
\[
\underset{t\rightarrow \infty}{\text{lim }}\|\hat{g}(t,\cdot)-\Pi_SG\|_{\sH} =0. 
\]
\subsubsection{The Optimal Regressor in $\sH_S$}
\label{sec:optimalHS}
Above we characterized the optimal regressor $\hat{g}(t,\cdot) \in \sH$ when the subspace $\sH_S$ is PE. It is also possible to pose the original regression problem in $\sH_S$ and seek the optimal regressor $\hat{g}_S(t\cdot) \in \sH_S$ when $\sH_S$ is PE.
In this case we seek the approximation  $\hat{g}_S(t,\cdot)\in \sH_S$ that satisfies the equation
    \begin{align*}
        \langle(T_\phi(0,t)+ \gamma I)\hat{g}_S(t,\cdot),h_S\rangle_\sH = \langle T_\phi(0,t)G,h_S\rangle_\sH \quad \text{ for all } h_S\in \sH_S.
    \end{align*}
    Then the solution $\hat{g}_S(t,\cdot)$ can also be written as
    \[
    \hat{g}_S(t,\cdot)= \left (\Pi_S (T_\phi(0,t)+\gamma I)\Pi_S \right )^{-1}\Pi_S T_\phi(0,t) G.
    \]
We can apply the PE condition \[
    \langle \Pi_S(T_\phi(0,t) +\gamma I)\Pi_S g,g\rangle_\sH \geq (\gamma_1 m +\gamma)\|g\|^2_\sH
    \]
    for all $g \in \sH_S$. This gives an upper bound
    \[
    \|(\Pi_S(T_\phi(0,t)+ \gamma I)\Pi_S)^{-1} g\|_\sH \leq \frac{1}{\gamma_1 m +\gamma} \|g\|_\sH
    \]
    for all $g\in \sH_S$. In this case, $\hat{g}_S(t,\cdot) \in \sH_S$, so that  the error $\tilde{g}_S(t,\cdot) = \hat{g}_S(t,\cdot) - \Pi_S G =  \Pi_S \hat{g}_S(t,\cdot) - \Pi_S G $ can then be expressed as follows \[
    \tilde{g}_S(t,\cdot) = (\Pi_S( T_\phi(0,t)+\gamma I)\Pi_S)^{-1}[\Pi_S T_\phi(0,t)(I-\Pi_S)G - \gamma \Pi_S G].
    \]
    We can bound each of the two terms on the right hand side of the equality by writing
    \[
    \|(\Pi_S( T_\phi(0,t)+\gamma I)\Pi_S)^{-1}[\Pi_S T_\phi(0,t)(I-\Pi_S)G]\|_\sH \leq \frac{ \knl^2 m \Delta}{\gamma_1 m + \gamma} \|(I-\Pi_S)G\|_\sH,
    \]
    \[
    \|(\Pi_S( T_\phi(0,t)+\gamma I)\Pi_S)^{-1}[\gamma \Pi_S G] \|_\sH\leq \frac{\gamma }{\gamma_1 m + \gamma} \|G\|_\sH.
    \]
We now have the error bound
\begin{align}
    \|\hat{g}_S(t,\cdot)-\Pi_S G\|_{\sH} &\leq \frac{\bar{\knl}^2 m \Delta}{\gamma_1 m + \gamma} \|(I-\Pi_S)G\|_{\sH} \notag \\
    &\hspace*{.25in}+ \frac{\gamma}{\gamma_1 m + \gamma} \|\Pi_S G\|_{\sH}. \label{eq:galerkinBound}
\end{align}

\underline{Observations:}
\begin{enumerate}
    \item 
    {The analysis in Sections \ref{sec:optimalH} and \ref{sec:optimalHS} shows that the error in the optimal offline estimate can be controlled by a PE condition satisfied by  the operator $T_\phi(t,t+\Delta)$. The PE condition establishes that the best offline regressor estimate $\hat{g}(t,\cdot)$ of $G\in \sH_S$ converges to $G$.} 
    \item Note that the bound on $\tilde{g}$ given by Equation \ref{eq:HSbound} consists of three terms that each behave differently as $m \to \infty$. The first term converges to a constant proportional to the projection error $(I-\Pi_S)G$ as $m \to \infty$. The second term decays to zero as $m \to \infty$. However, the last term, referred to as the drift term, grows indefinitely as $m \to \infty$. If we seek to compute the optimal estimate $\hat{g}(t,\cdot) \in \sH$ via continuous regression when only $\sH_S$ is PE, this drift term results. The primary issue is that we cannot apply the PE condition
    \[
    \|(T_\phi(0,t)+\gamma I)^{-1}g\|_{\sH} \leq \frac{1}{\gamma_1 m + \gamma}\|g\|_{\sH}.
    \]
    on $\Delta G$ because $\Delta G \notin \sH_S$. This problem is addressed by seeking the optimal regressor $\hat{g}_S(t,\cdot) \in \sH_S$ that filters out this drift term.  This error $\tilde{g}_S(t,\cdot)$ is bounded by only two terms as given in Equation \ref{eq:galerkinBound} where the first converges to a constant proportional to $(I-\Pi_S)G$ and the second decays to zero as $m \to \infty$.

    \item In a typical situation, in applications to finite dimensional approximations, it is frequently the case that $S$ consists of a finite number of samples,
    $
     S:=\{\xi_{1},\ldots,\xi_{N} \}
    $, and then 
    \[
     \sH_S:=\sH_N:=\text{span}\{ \knl_{\xi_i}\ | \ 1\leq i\leq N \}.
     \] 
     In this case, the error between the best offline estimate $\hat{g}(t,\cdot)$ and the projection $\Pi_N G$ can be bounded above by the rate of convergence of $(I-\Pi_N)G$ to zero. This is carried out in detail in the example in Section \ref{sec:examples}. 
     
     \item Since the operator $T_\phi(t,\Delta)$ is compact, if the PE condition in Equation \ref{eq:PE} holds for $\sV:=\sH_S$, it must be the case that $\sH_S$ is finite dimensional. Otherwise, the PE condition would imply that the compact operator $T_\phi(t,\Delta): \sH \to \sH$ has a bounded inverse, which is impossible on an infinite dimensional space $\sH_S$. It follows that the primary application of case (1) will be to understand convergence of the optimal regressor when finite dimensional subspaces of approximants are PE.

\end{enumerate}

\subsubsection{Approximation: Method (1)}
\label{sec:approxMethod1}
     The exact optimal solution of the regression problem in continuous time, given by $\hat{g}(t,\cdot)=(T_\phi(0,t)+\gamma I)^{-1}T_\phi(0,t)G$, or correspondingly $\hat{g}_S(t,\cdot)$ defines a function of time and space, $\hat{g}(t,x)$ for $t\geq 0$ and $x\in X$. Practical implementations and algorithms must employ approximations of the exact regression solution. As in methods for parametric estimation in Euclidean spaces, recursive methods to approximate the solution of this problem are often used.  We have studied one recursive method for the problem in this paper in \cite{kepler}. Here, and in the numerical examples in Section \ref{sec:examples}, we comment on implementations of offline approximations.  Let $\hat{g}_N(t,\cdot)$ be the approximation of either $\hat{g}(t,\cdot) \in \sH$ or $\hat{g}_S(t,\cdot) \in \sH_S$. When we define the approximation $\hat{g}_N(t,x):=\sum_{j=1}^N \knl_{\xi_{N,j}}(x)\alpha_{N,j}(t)$ using the continuous regression error functional, we obtain an equation that has the form
\begin{align}
     \sum_{j=1}^N \biggl (\int_0^t \knl(\xi_{N,j},\phi(\tau))
     \knl(\phi(\tau),\xi_{N,i}) \nu(d\tau) &+  \gamma \knl(\xi_{N,j},\xi_{N,i}) \biggr ) \alpha_{N,j}(t) \notag \\
     &=  \int_0^t \knl(\xi_{N,i},\phi(\tau))y(\tau)\nu(d\tau). \label{eq:approxN1}
\end{align}
Note also that the optimal estimate in the continuous case requires integrating the kernel functions along the orbit, which ordinarily cannot be directly calculated in closed form. Consequently, the approximation requires approximating the integral term. For a function $\rho:[0,t] \to \RR$, a general form of a quadrature rule builds the approximation 
\[
\int \rho(\tau)d(\tau) \approx \sum_{k=1}^{N_w} w_k \rho(\tau_k),
\]
where $N_w$ is the number of quadrature points, $\{w_k\}_{k=1}^{N_w}$ are the quadrature weights, and $\{\tau_k\}_{k=1}^{N_w}$ are the quadrature points.  The approximation of the integral can be determined from a multitude of different quadrature techniques. Standard examples include the trapezoidal rule, Simpsons rule, or Gaussian quadratures. Here we use a particularly simple quadrature rule. Recall the above definitions of $\TT=[0,t]$, the subintervals $\TT_{N,k}$, and the quadrature points $\xi_{N,k}:=\phi(t_{N,k})$ for $t_{N,k}\in \TT_{N,k}$ from the proof of Theorem \ref{th:Tphi} where the weights $\{w_k\}_{k=1}^{N_w}$ are given by the time intervals $\{t_{N,k+1}-t_{N,k}\}_{k=1}^{N_w}$. One form of approximating the integral via a one point quadrature rule generates the equation 
\begin{align}
     \sum_{j=1}^N \biggl (
     \sum_{k=1}^{N_w} w_k &\knl(\xi_{N,j},\xi_{N,k})
     \knl(\xi_{N,k},\xi_{N,i})+  \gamma \knl(\xi_{N,j},\xi_{N,i}) \biggr )  \alpha_{N,j}(t)\notag \\
     &=   \sum_{k=1}^{N_w} w_k \knl(\xi_{N,i},\xi_{N,k})y(t_{N,k}). \label{eq:quadApprox}
\end{align}

We end this section with the pseudo-code implementation in Algorithm \ref{alg:approx1} to explicitly determine the estimate from this approximation method. Note that the initialization steps require that we select samples that have sufficient distance between one another ensuring the calculation of stable numerical estimates. \cite{powell2022koopman,wendland}
\begin{algorithm}
\caption{\centering Building an Estimate of the Continuous Regressor Using Quadrature Approximations }
\label{alg:approx1}
\begin{algorithmic}
\STATE{\textbf{input}: kernel function $\knl$, kernel hyperparameters $\beta$, regularization parameter $\gamma$, and desired kernel separation $\eta$}
\STATE{\textbf{select samples for kernel centers}: Collect data $\{t_i,x_i,y_i\}_{i=1}^M$}
\STATE{$S_N = \xi_{N,1} = x_1$, $N=1$}
\STATE{Select placement of kernel centers}
\FOR{$x_i$ in $\{x_i\}_{i=2}^M$}
\STATE{if $\|\xi_{N,j}-x_i\| > \eta, \quad \forall \xi_{N,j} \in S_N $}
\STATE{Add sample $x_i$ to the set of centers}
\STATE{$S_N = S_N \cup \{x_i\}$}
\STATE{Add sample time $t_i$ to the set of indexed times}
\STATE{$t_{N,i} = t_i$, $\TT_{N+1} = \TT_{N}\cup \{t_{N,i}\}$, $N = N+1$}
\ENDFOR
\STATE{Construct Kernel Matrix}
\STATE{$\KK(S_N,S_N)_{i,j} = [\mathfrak{K}(\xi_{N,i},\xi_{N,j}))]$}
\STATE{Determine Weighting Matrix}
\STATE{$\bm{W} = \text{diag}(w_k), \quad w_k = t_{k+1}-t_k$}
\STATE{Construct Output Vector}
\STATE{$\bm{y} = \{y_1,..y_N\}, \quad y_i = y(\xi_{N,i}) = y(\phi(t_{N,i}))$}
\STATE{Calculate coefficients $\bm{\alpha_N} = \{\alpha_{N,1},...,\alpha_{N,N}\}^T$ of estimate}
\STATE{$\bm{\alpha_N} = \bigl (\KK(S_N,S_N)\bm{W}\KK(S_N,S_N) +\gamma \KK(S_N,S_N)\bigr )^{-1} (\KK(S_N,S_N)\bm{W} \bm{y}$)}
\RETURN Estimate $\hat{g}_N(t,\cdot) = \sum_{j=1}^N \alpha_{N,j} \knl(\xi_{N,j},\cdot)$ .
\end{algorithmic}
\end{algorithm}

\subsubsection{Approximation: Method (2)}
\label{sec:approxMethod2}
The system of Equations \ref{eq:quadApprox} above requires the introduction of quadratures over $[0,t]$. In this section, we introduce a second method of approximation that eliminates the calculation of quadratures.  The coefficients $\{\alpha_{N,i}\}_{i=1}^N$ of the second method of approximation satisfy the following equation
\begin{align}
     \sum_{j=1}^N \int_0^t \knl(\xi_{N,j},\phi(\tau))
     &\knl(\phi(\tau),\xi_{N,k})\alpha_{N,j}(\tau) \nu(d\tau) \notag\\
     & +  \gamma \knl(\xi_{N,j},\xi_{N,i}) \alpha_{N,j}(t) \notag \\
     &=  \int_0^t \knl(\xi_{N,i},\phi(\tau))y(\tau)\nu(d\tau). \label{eq:approxN2}
\end{align}
In the above equation the unknown coefficients $\{\alpha_{N,i}(t)\}_{i=1}^N$ are now inside the integrand and are integrated along the orbit. Taking the time derivative of Equation \ref{eq:approxN2} we can get an evolution equation for the coefficients $\{\alpha_{N,i}(t)\}_{i=1}^N$. It is given by
\begin{align}
      \notag\\
    \dot{\alpha}_{N,j}(t) 
     = (\gamma \knl(\xi_{N,j},\xi_{N,i}))^{-1}
     \notag \\
     \times \biggl ( \knl(\xi_{N,j},\phi(t))y(t)
     - \sum_{j=1}^N &\knl(\xi_{N,j},\phi(t))
     \knl(\phi(t),\xi_{N,i})\alpha_{N,j}(t)\biggr ) . \label{eq:approxNevolution}
\end{align}
As opposed to the previous approximation method, estimates constructed according to Equation \ref{eq:approxNevolution} evolve continuously according to the ODE given by Equation \ref{eq:approxNevolution}, rather than approximations generated by Equation \ref{eq:approxN1}. As opposed to the previous offline optimization problem, this approximation method could, in principle, generate and update estimates in real-time. We emphasize that the theory presented in this paper only applies to Method (1), and we leave the theoretical study of Method (2) for a future paper. However, for completeness, we examine the performance and convergence behavior of both methods in the numerical results of this study. 

Like the Method (1), we present Algorithm \ref{alg:approx2} to outline the steps needed to implement this approximation method.
\begin{algorithm}
\caption{\centering Building an Estimate of the Continuous Regression Estimate Using Approximation Method (2)}
\label{alg:approx2}
\begin{algorithmic}
\STATE{\textbf{input}: kernel function $\knl$, kernel hyperparameters $\beta$, regularization parameter $\gamma$, and desired kernel separation $\eta$}
\STATE{\textbf{kernel placement steps}: Run system and collect data $\{t_i,x_i,y_i\}_{i=1}^M$}
\STATE{$S_N = \xi_{N,1} = x_1$, $N=1$}
\STATE{Select placement of kernel centers from initial orbit}
\FOR{$x_i$ in $\{x_i\}_{i=2}^M$}
\STATE{if $\|\xi_{N,j}-x_i\| > \eta, \quad \forall \xi_{N,j} \in S_N $}
\STATE{Add sample $x_i$ to the set of centers}
\STATE{$S_{N+1} = S_{N}\cup \{x_i\}$}
\STATE{Add sample time $t_i$ to the set of indexed times}
\STATE{$t_{N,i} = t_i$, $\TT_{N+1} = \TT_{N}\cup \{t_{N,i}\}$, $N = N+1$}
\ENDFOR
\STATE{Construct Kernel Matrix}
\STATE{$\KK(S_N,S_N)_{i,j} = [\mathfrak{K}(\xi_{N,i},\xi_{N,j}))]$}
\STATE{Choose initial conditions}
\STATE{$\phi(0) = x_0$}
\STATE{Denote $\bm{\alpha_N} = \{\alpha_{N,1},...,\alpha_{N,N}\}^T$ coefficient vector of estimate}

\STATE{Denote $\knl(S_N,\cdot) = \{k(\xi_{N,1},\cdot),...,k(\xi_{N,N},\cdot)\}^T$ vector of kernel functions}
\STATE{Numerically integrate ODEs}
\STATE{$\dot{\phi}(t) = f(\phi(t))$}
\STATE{$\dot{\bm{\alpha}}_{N}(t) 
     = (\gamma \KK(S_N,S_N))^{-1}\biggl ( \knl(S_N,\phi(t))y(t)
     - \knl(S_N,\phi(t))
     \knl(S_N,\phi(t))^T\bm{\alpha}_N(t)\biggr ) $ }
\RETURN Estimate $\hat{g}_N(t,\cdot) = \sum_{j=1}^N \alpha_{N,j}(t) \mathfrak{K}_{N,j}(\cdot)$ .
\end{algorithmic}
\end{algorithm}

\subsection{Learning Theory and the Regression Estimate} 
\label{sec:learningTheory}
In this section, we give an expanded discussion of the similarities and differences between the approximations described in Sections \ref{sec:approxMethod1} and \ref{sec:approxMethod2} in this paper and related techniques in distribution-free learning theory, statistical learning theory, and machine learning theory. For the most part, these learning theory approaches focus on estimates generated from samples of discrete, independent and identically distributed (IID) stochastic systems. See \cite{gyorfi,williams} for popular summaries of the state-of-the-art in these fields.
Learning theory in general \cite{vapnik1999overview} is concerned with a number of distinct problems including pattern recognition, classification, and function estimation. The learning problem for function estimation involves approximating a mapping $G$ from a set of inputs $x \in X$ to elements $y$ in an output space $Y$. It is commonly assumed that the data is a collection of $M$ noisy sample pairs $\{z_i\}_i^M =\{x_i,y_i\}_i^M \subset Z = X \times Y$ that are generated from a discrete IID stochastic process defined by the probability measure $\mu$ on $Z$. Ideally, optimal estimates of $G$ are defined to be minimizers of the functional $E_\mu$, commonly referred to as the expected risk,

\begin{equation*}
    E_\mu(g):=\int |y-g(x)|^2 \mu(dz),
\end{equation*}
where $\mu$ is a joint measure on the sample space $Z$. Note that the measure $\mu$ can be rewritten, $\mu(dz) = \mu(dy,dx) = \mu_Y(dy|x)\mu_X(dx)$, where $\mu_Y(dy|x)$ is the conditional measure on $Y$ given $x \in X$ and $\mu_X$ is the marginal measure on the input space $X$.  In principle, the ideal minimizer of $E_\mu$ is given by $$G_\mu(x) = \int y\mu_Y(dy|x)$$ with $G_\mu$ referred to as the regressor function. However, the measures $\mu$, $\mu_Y$, and $\mu_X$ are generally unknown, and the ideal solution, $G_\mu$ above cannot be computed in practice. It is this reason that the above problem is said to define a type of distribution-free learning problem \cite{gyorfi}. 

Since the regressor $G_\mu$ cannot be computed in general, standard approaches in machine learning theory replace the error functional above with its regularized, discrete counterpart
\begin{equation*}
    E_M(g) := \frac{1}{M}\sum_{i=1}^M |y_i - g(x_i)|^2 +\gamma \|g\|^2_\mathcal{U} 
\end{equation*}
that is defined in terms of the samples $\{(x_i,y_i\}_{i=1}^M$ of a discrete IID stochastic process. Here $\gamma$ is the regularization parameter and $\mathcal{U}$ is a space of functions that has a norm that measures smoothness. When some finite dimensional space $\sH_N = \text{span}\{\psi_j\ | \ 1\leq j\leq N\}$ is used to construct approximations, the method of empirical risk minimization (ERM) seeks the function $\hat{g}_{N,M}(\cdot) = \sum_{j=1}^N {\alpha}_{N,j}\psi_j(\cdot)$ that is the minimizer
\[
\hat{g}_{N,M} = \min_{g\in\sH_N}E_M(g_N)
\]
Note that the minimizer $\hat{g}_{N,M}$ depends on the number of samples $M$ and the number of basis functions $N$. The convergence of $\hat{g}_{N,M} \to G$ as $N$ and $M$ increase is a well-studied topic, certainly one of the most well-known in learning theory. Again, see \cite{gyorfi,williams} for a complete description of the myriad of approaches to this problem.
The relationship of the approach in this paper to the standard learning problem can be made more precise by assuming that the basis is taken to be $\sH_N = \text{span}\{\knl_{\xi_{N,i}}\ | \ 1\leq i\leq N\}$ that is the scattered basis as we use in Equations \ref{eq:approxN2} and \ref{eq:approxNevolution}. In this case, it is well-known that $\hat{g}_{N,M}(\cdot) = \sum_{j=1}^N {\alpha}_{N,j} \knl_{\xi_N,j} $ where the coefficients satisfy
\begin{equation}
\sum_{j=1}^N\biggl(\sum_{i=1}^M \knl_{\xi_{N,j}}(x_i)\knl_{\xi_{N,k}}(x_i) + \gamma\langle \knl_{\xi_{N,j}},\knl_{\xi_{N,k}}\rangle_\mathcal{U}\biggr) {\alpha}_{N,j} = \sum_{i=1}^M y_i \knl_{\xi_{N,k}}(x_i)
\label{eq:standardApprox}
\end{equation}

These equations should be carefully compared to Equations \ref{eq:approxN1} and \ref{eq:approxN2}. In Equation \ref{eq:standardApprox} the inner summation is over the samples from an IID process see \cite{devore2004mathematical,temlyakov2018multivariate}. For the regression problem in continuous time in Equation \ref{eq:approxN1}, the inner summation above is replaced with an integration in time along a trajectory. We see that the use of one point quadrature rule in time, which yields Equation \ref{alg:approx1}, generates a set of algebraic equations that have a similar structure to that which arises in learning theory for discrete stochastic processes in Equation \ref{eq:standardApprox}. In fact, if the sample times are uniformly distributed, the weights of integration are constant and can be cancelled in Equation \ref{eq:approxN1}. In such a case, the two sets of equations have identical form. Although the form of the equations is the same, the error analysis for the two cases differs substantially. One significant difference is that, in the typical learning theory scenarios, it is assumed that samples are dense in $X$, while this is a rather special case for deterministic dynamical systems. For dynamical systems, the set over which the error analysis is performed is typically unknown. The set of samples along a trajectory can be dense in a very irregular set. This fact is emphasized in the numerical example in Section \ref{sec:examples}. Additionally, the error analysis for Equation \ref{eq:approxN1} relies on a PE condition that is not part of the stochastic framework. The error analysis of Equation \ref{eq:standardApprox} usually results from taking the expectation. For deterministic dynamical systems, however, there is no definition of expectation. This work instead considers when the inputs are generated along a trajectory $t \mapsto \phi(t)$ governed by some underlying, generally unknown evolution.   For a more in-depth discussion of learning theory for unknown discrete samples, see the work of Cucker and Zhou in \cite{cucker}  or Devito et. al. in \cite{belkindevito}. 

\subsection{Subspaces $\sV$ Chosen as a Spectral Space $A^s$}
The case studied above suffices to derive rates of convergence of approximations in finite dimensional spaces $\sH_S$ where $S$ is a finite set of points.  From a practical point of view, the results apply to many important cases that can be implemented.  However, from a theoretical point of view, we would like to be able to identify a closed subspace $\sV$ that is ``as large as possible''   in the definition in Equation \ref{eq:PE}.  It is of interest therefore to  find a space $\sV\subseteq \sH$ that is infinite dimensional and satisfies the PE condition. We do this by introducing spectral approximation spaces $A^s$ associated with a fixed, compact, self-adjoint operator $T: \sH \to \sH$.  By the spectral theorem for compact, self-adjoint operators, this means that the operator can be expressed as 
\[
T h:=\sum_{k=1}^\infty \lambda_k \langle h, h_k \rangle_\sH h_k, 
\]
where $\{\lambda_k\}_{k\in \NN}$ is the sequence of eigenvalues arranged in nonincreasing order and repeated as needed for  multiplicity, and $\{h_k\}_{k\in \NN}\subset \sH$ is a  corresponding $\sH$-orthonormal collection of eigenfunctions. The only possible accumulation point of the eigenvalues is zero. In the following we always assume that $\lambda_k\rightarrow 0$ as $k\rightarrow \infty$, since if the sum above terminates after a finite number of terms, all the approximation spaces introduced below  degenerate and are equivalent. We also assume that the kernel of $T$ is equal to $\{0\}$. By  \cite{belkindevito} Proposition 8,  the eigenvectors of $T$ span $\text{nullspace}(T)^\perp$, so in the case at hand $\{h_k\}_{k\in \NN}$ are an orthonormal basis for $\sH$.   

By virtue of the functional calculus for compact, self-adjoint operators, the operator $T^s$ is well-defined for all $s\geq 0$ by the expansion 
\[
T^sh:=\sum_{k=1}^\infty \lambda^s_k \langle h, h_k \rangle_\sH h_k, \quad \text{ strongly in } \sH.
\]
We define the spectral approximation space $A^s:=A^s(\sH)$ to be
\begin{align*}
    A^s:=\left \{
    h\in \sH\ \biggl | \ \|h\|_{A^s}<\infty
    \right \},
\end{align*}
where the norm is given by 
\begin{align*}
\left \| h\right \|^2_{A^s}:=
\sum_{k=1}^\infty (\lambda_k^{-s}|\langle h, h_k \rangle_\sH|)^2 : = \sum_{k=1}^\infty \lambda_k^{-2s}|\langle h, h_k \rangle_\sH|^2
\end{align*}
 The spaces $A^s$  have a long history and are closely related to approximation spaces. \cite{piestch81,devorelorentz}.  Since $\lambda_k\rightarrow 0$, the weight $\lambda_k^{-2s}$ grows as $k\rightarrow \infty$. The space $A^s$ consists of functions in $\sH$ whose generalized Fourier coefficients $\{\langle h,h_k\rangle_\sH \}_{k\in \NN}$converge faster than $\{\lambda_k^{-2s}\}_{k \in \NN}$ increases. It can be shown that these spaces are nested with $A^{s}\subseteq A^r$ whenever $r\leq s$. It should also be noted that $A^0=\sH$. So, in particular we have $A^{s+1} \subseteq A^{s} \subseteq \cdots \subset \sH$. In the language of approximation theory, the $A^{s}$ define a scale of spaces for $s\geq 0$ containing functions of increased (generalized) smoothness as $s$ increases.  
 
The following theorem provides the technical connection between the spaces $\sH$ and spectral space  $A^1$ in terms of the operator $T$. 
\begin{theorem}
\label{th:spectralspaces}
We have the equivalence 
\[
\|h\|^2_{A^s}\approx \langle T^{-s} h, T^{-s} h\rangle_\sH.
\]
 for all $h\in \text{domain}(T^{-s})\equiv A^s$ and $T^{s}:\sH\rightarrow A^s$ is an isometry.
\end{theorem}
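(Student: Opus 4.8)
The plan is to prove both assertions by a direct computation in the orthonormal eigenbasis $\{h_k\}_{k\in\NN}$, which (as noted above via Proposition 8 of \cite{belkindevito}, using that the kernel of $T$ is trivial) is a complete orthonormal system for $\sH$. The only genuine care needed is in identifying domains and justifying that the norms may be evaluated termwise, so the whole argument rests on Parseval's identity.

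First I would make precise the operator $T^{-s}$ via the functional calculus, writing $T^{-s} h := \sum_{k} \lambda_k^{-s}\langle h, h_k\rangle_\sH h_k$, and observe that this series converges in $\sH$ exactly when $\sum_k \lambda_k^{-2s}|\langle h, h_k\rangle_\sH|^2 < \infty$; that is, $\text{domain}(T^{-s}) = A^s$ by the very definition of the $A^s$ norm. For $h \in A^s$, applying Parseval to the expansion of $T^{-s} h$ in the orthonormal basis $\{h_k\}$ then gives
\[
\langle T^{-s} h, T^{-s} h\rangle_\sH = \sum_{k=1}^\infty \lambda_k^{-2s}|\langle h, h_k\rangle_\sH|^2 = \|h\|_{A^s}^2,
\]
so the claimed equivalence holds, in fact, as an equality (the implied constants are both $1$).

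For the isometry claim, I would take an arbitrary $f \in \sH$ and compute the generalized Fourier coefficients of $T^s f$: since $T^s f = \sum_j \lambda_j^s \langle f, h_j\rangle_\sH h_j$ and the $h_k$ are orthonormal, $\langle T^s f, h_k\rangle_\sH = \lambda_k^s \langle f, h_k\rangle_\sH$. Substituting into the definition of the $A^s$ norm, the weights $\lambda_k^{-2s}$ cancel the factor $\lambda_k^{2s}$, leaving $\|T^s f\|_{A^s}^2 = \sum_k |\langle f, h_k\rangle_\sH|^2 = \|f\|_\sH^2$ by Parseval. This simultaneously shows $T^s f \in A^s$ (the sum is finite) and that $T^s$ preserves norms. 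To see that $T^s$ maps \emph{onto} $A^s$, given $h \in A^s$ I would set $f := T^{-s} h$, which lies in $\sH$ precisely because $h \in A^s$, and verify termwise that $T^s f = h$.

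The computations are routine, and there is no serious analytic obstacle; the theorem is essentially an unwinding of the definitions through the spectral decomposition. The points I would be most careful about are: (i) that $\{h_k\}$ is a genuine orthonormal basis and not merely an orthonormal set, since this is where triviality of $\ker T$ enters, guaranteeing the full Parseval identity rather than only Bessel's inequality; (ii) the precise identification $\text{domain}(T^{-s}) = A^s$, which is what makes the first displayed identity meaningful; and (iii) the justification that $\langle T^{-s}h, h_k\rangle_\sH$ and $\langle T^s f, h_k\rangle_\sH$ may be read off coefficient-by-coefficient, which follows from continuity of the inner product together with convergence of the defining series in $\sH$.
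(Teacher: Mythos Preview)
Your proposal is correct and follows essentially the same approach as the paper: a direct computation in the eigenbasis $\{h_k\}$ using Parseval to obtain $\langle T^{-s}h,T^{-s}h\rangle_\sH=\sum_k\lambda_k^{-2s}|\langle h,h_k\rangle_\sH|^2=\|h\|_{A^s}^2$ and then $\|T^s f\|_{A^s}=\|f\|_\sH$. If anything, you are more careful than the paper's proof, which omits the explicit domain identification $\text{domain}(T^{-s})=A^s$ and the surjectivity argument you supply.
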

\begin{proof}
Suppose $h\in \sH$. Then 
\[
\left \langle T^{-s} h, T^{-s}h\right \rangle_\sH = \sum_{k=1}^\infty
\lambda_k^{-2s}|\langle h,h_k\rangle_{\sH}|^2 = \|h\|^2_{A^s}.
\]
It is also immediate that 
$
\|T^{s}h\|_{A^s}=\|h\|_\sH 
$, so $T^{s}$ is an isometry from $\sH$ onto $A^{s}$. In particular $T:\sH\rightarrow A^1$ is an isometry. 
\end{proof}

\noindent In view of Theorem \ref{th:spectralspaces}, when the PE condition holds,  we have constants $\gamma_1, \gamma_2>0$ such that 
\[
\gamma_1 \|h\|^2_{A^1} \leq  \left \langle T_{\phi}(t,t+\Delta)h,h \right \rangle_{\sH}  \leq \gamma_2 \|h\|_{A^1}^2
\]
for all $h\in A^1\subseteq \sH$ and $t\in \RR^+$. Note that this equivalence holds uniformly for the family $\{T_\phi(t,t+\Delta t)\}_{t\in\RR^+}$ for all $t\in\RR^+$. This pair of inequalities can also be interpreted as the statement that $T_\phi(t,t+\Delta) \approx T$ on $A^1.$

 Now we return to the study of the error when we choose $\sV:=A^{1}$, and we seek the optimal $\hat{g}_\sV(t,\cdot) \in A^1$.  In analogy to Case 1, we set 
\begin{align}
    \Delta G&:=(I-\Pi_\sV)G, \notag \\
    y(t) &=(\Pi_\sV G)(\phi(t)) + \Delta G(\phi(t)),\\
    \tilde{g}_\sV(t,\cdot)&:=\hat{g}_\sV(t,\cdot)-\Pi_\sV G. \label{eq:newerror}
\end{align}
Following the same plan of attack as in Case 1, we obtain
\begin{align*}
    \tilde{g}_\sV(t,\cdot)=\left (\Pi_\sV(T_\phi(0,t)+\gamma I)\Pi_\sV \right )^{-1}\left (\Pi_\sV T_\phi(0,t) \Delta G -\gamma \Pi_\sV G \right )
\end{align*}

In this case, in contrast,  we can write
\begin{align*}
   \left \langle (T_\phi(0,t)+\gamma I)h,h \right \rangle_{\sH}
   & \geq \sum_{k=1}^m \left \langle T_\phi((k-1)\Delta,k\Delta)h,h\right \rangle_{\sH} \\
   &\hspace{2em} +\gamma \langle h,h\rangle_{\sH}\\
   & \geq
   (\gamma_1 m +\gamma) \|h\|_{A^1}^2 
\end{align*}
for all  $h\in A^1 \subseteq \sH$. 
In other words $(T_\phi(0,t)+\gamma I)^{-1}$ restricted to $A^1\subseteq \sH$ is a bounded linear operator that satisfies 
\begin{equation}
{\|(T_\phi(0,t)+\gamma I)^{-1} h\|_{\sV}\leq 
\frac{1}{\gamma_1 m+\gamma } \|h\|_\sH.} \label{eq:newerror1}
\end{equation}
By definition, $\Pi_\sV G\in A^1$ and $\Pi_\sV T_\phi(0,t)\Delta G\in A^1$. 
 Since $(\Pi_\sV T_\phi(t)\Delta G-\gamma \Pi_\sV G)\in A^1$, we can apply the bound in Equation \ref{eq:newerror1} to Equation \ref{eq:newerror}.  The remainder of the proof is unchanged and we conclude that, if $G\in A^1$, 
\[
\limsup_{t\rightarrow \infty} \|\hat{g}_\sV(t,\cdot)-\Pi_\sV G\|_\sH=0.
\]

\noindent \underline{Observations}:
\begin{enumerate}
    \item It should be emphasized that the operator $T_\phi(t,t+\Delta):\sH\rightarrow \sH$ is compact, as described in Theorem \ref{th:Tphi}. But when the PE condition holds with $\sV:=A^1$,  it is not compact as an operator $\sH\rightarrow A^1$. It is boundedly invertible as a map from $\sH\rightarrow A^1$. 
    \item Intuitively, the PE condition can be understood as a statement that the local approximation space defined over the small time-span $[\tau,\tau+\Delta]$ in terms of the operator $T_\phi(\tau,\tau+\Delta)$ is spectrally equivalent to the global approximation space defined over $[0,t]$ in terms of $T_\phi(0,t)$. 
\end{enumerate}

\section{Numerical Examples}
\label{sec:examples}
The error estimates above apply to quite general situations. Since some of our earlier works in \cite{pgk2020suff,kgp2019limit,powell2022koopman} have included numerical examples with  evolutions on compact manifolds, here we model  a trajectory that is dense in a complicated, unknown subset in $\RR^n$.  Consider the Lorenz system \begin{align*}
    \dot{x}(t)&=\sigma(y(t)-z(t)), \\
    \dot{y}(t)&=r(x(t)-y(t) - x(t)z(t)),\\
    \dot{z}(t)&=x(t)y(t)-bz(t)
\end{align*}
for $t\in \RR^+$. Figures \ref{fig:lorenzOrbitsXYZ} and \ref{fig:lorenzOrbitsXY}  illustrate orbits of the system for various initial conditions. Set $X=\RR^3$ and denote by $\phi(t):=\{x(t),y(t),z(t)\}^T$.
The complex nature of the trajectories of this system has been studied and  commented on so extensively that it is now understood as an exemplar of what chaos and complexity is, even in the popular press. 

\begin{figure}[ht!]
    \centering
    \begin{subfigure}[b]{0.49\textwidth}
    \includegraphics[width=\textwidth]{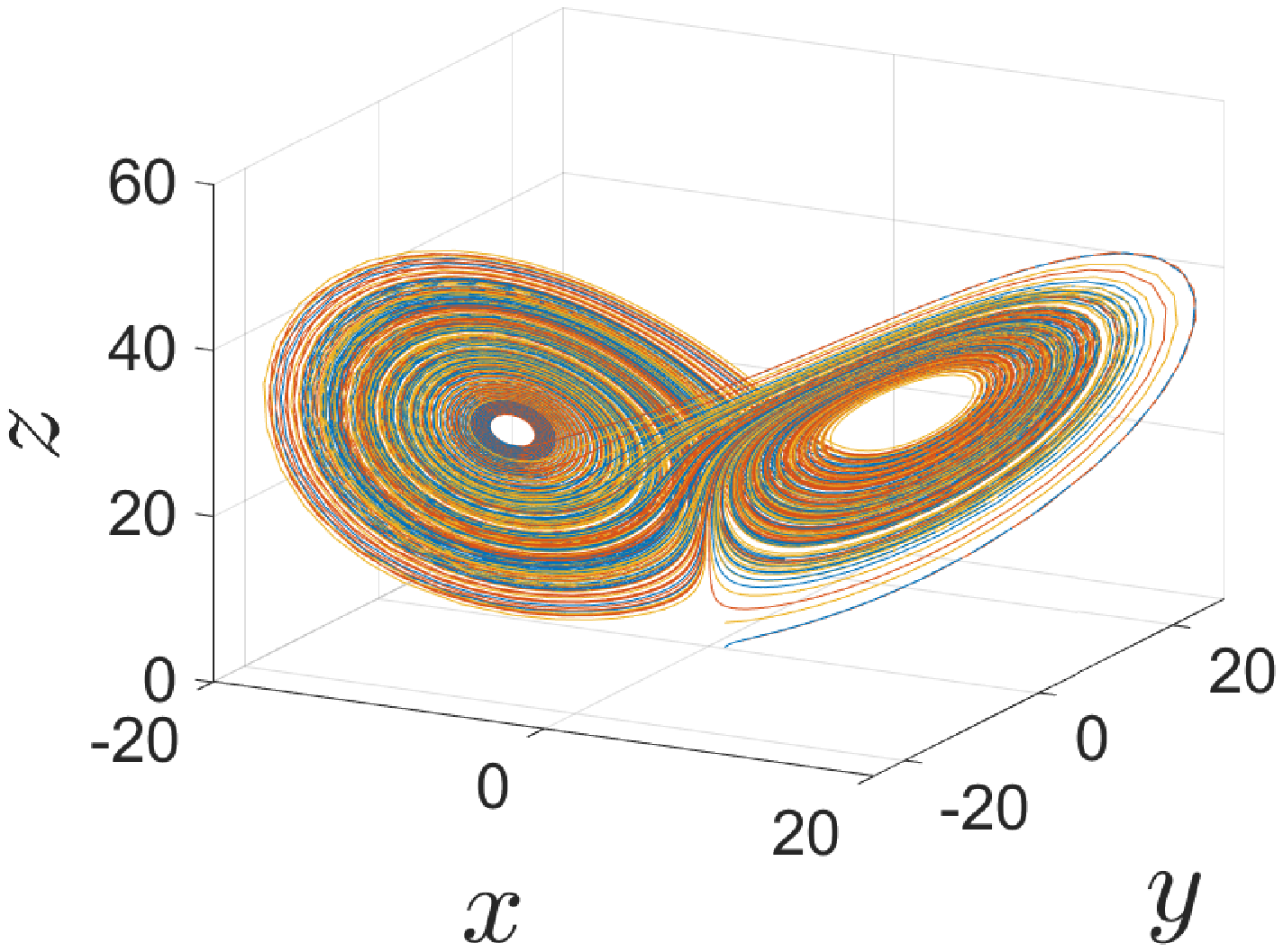}
    \caption{}
    \label{fig:lorenzOrbitsXYZ}
    \end{subfigure}
    \hfill
    \begin{subfigure}[b]{0.49\textwidth}
    \includegraphics[width=\textwidth]{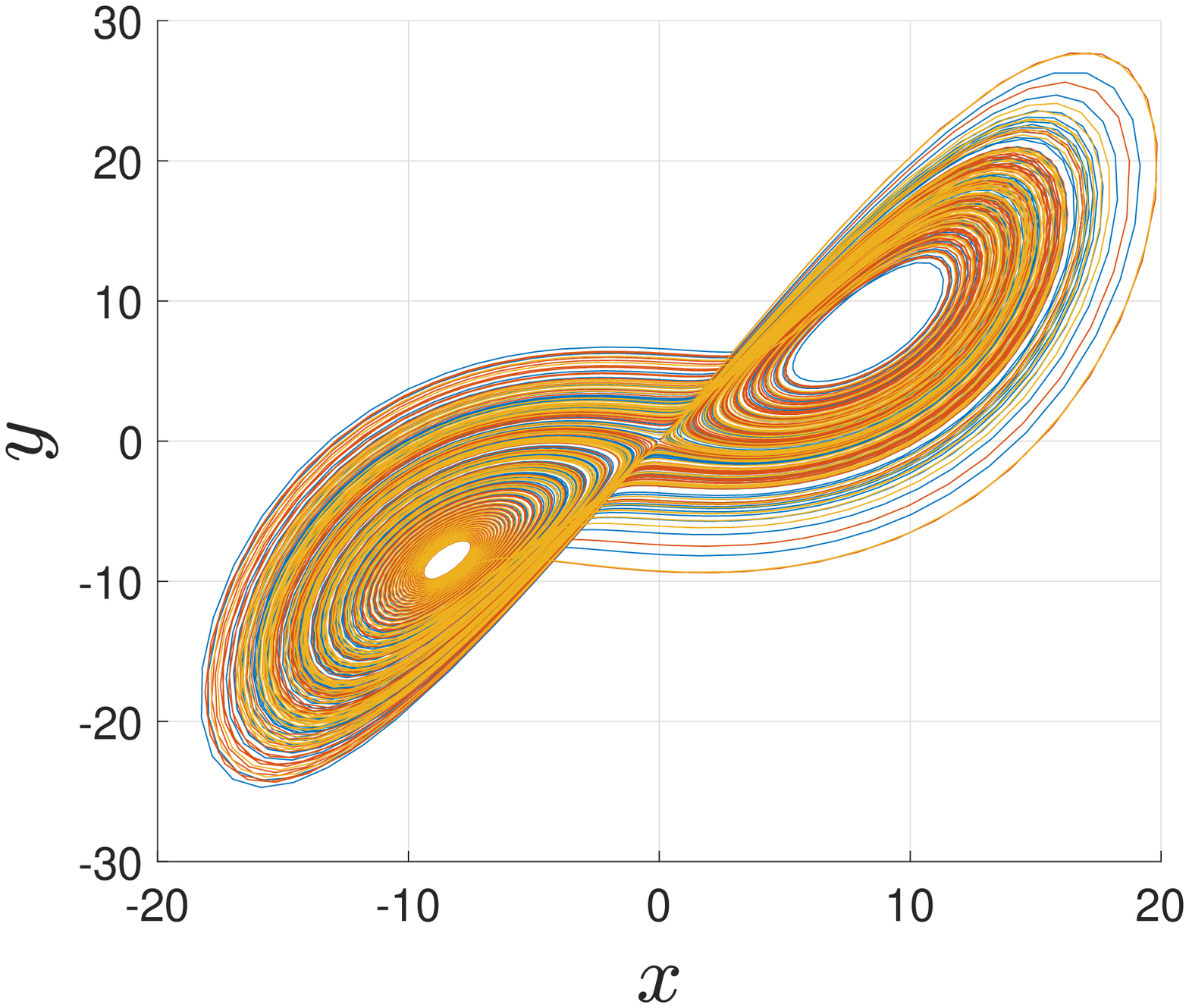}
    \caption{}
    \label{fig:lorenzOrbitsXY}
    \end{subfigure}
    \caption{The dynamics of the Lorenz system for several initial conditions (a) in three-dimensions (b) projected onto the $x-y$ plane.}
\end{figure}

There are a number of Lyapunov functions that have been introduced to study the long-term behavior of this system. One common choice is 
\[
V=rx^2 + \sigma y^2 + \sigma (z-2r)^2 \geq 0
\]
for all $(x,y,z)\in X$. Its derivative along trajectories is given by 
\[
\dot{V} = -2\sigma(rx^2 + y^2 + bz^2 -2brz ),
\]
which is negative outside of the compact set 
\[
\Omega:=\{(x,y,z)\in X \ | \ \dot{V}\geq 0 \}. 
\]
The fact that this set is compact follows by demonstrating that the boundary of the set $\{(x,y,z) \in X \ | \ \dot{V} = 0\}$ is an ellipse given by$$ 1 = \frac{x^2}{(\sqrt{br})^2}+\frac{y^2}{(\sqrt{b}r)^2}+ \frac{(z-r)^2}{r^2}.$$ Given that $\dot{V}$ is continuous and that points inside the ellipse satisfy the following inequality $$ 0 \geq rx^2 + y^2 +bz^2-2brz,$$  it is clear that the set $\Omega$ is the closure of the interior of the ellipse. Since $\Omega$ is compact and $V$ is continuous, the maximum $\bar{V}$ of $V$ over $\Omega$ is achieved, $\bar{V}=\max_{\phi \in \Omega}V(\phi)$. Define the dilation of the set $\Omega$ by some parameter $\epsilon>0$ to be
\[
\Omega_\epsilon:=\{\phi \in X \ | \ V(\phi)\leq \bar{V}+\epsilon \}.
\]
The set $\Omega_\epsilon$ is positive invariant. Any trajectory starting at $\phi_0\not \in \Omega$ is guaranteed to enter $\Omega_\epsilon$ in finite time and never leave this set. This means that for any initial condition $\phi_0\not \in \Omega$, the orbit $\Gamma(\phi_0):=\cup_{t\geq 0}\phi(t)$ is precompact, that is, $\overline{\Gamma(\phi_0)}$ is compact.  From standard results on dynamical systems \cite{walker}, it is known that the positive limit set $\omega^+(\phi_0)$  of a precompact trajectory $t\mapsto \phi(t)$, which is defined by 
\[
\omega^+(\phi_0):=\left \{ y\in X\ | \ \exists  t_k\rightarrow \infty \text{ such that } \phi(t_k)\rightarrow y \right \},
\]
is compact. In fact, we have 
\[
\phi(t)\rightarrow \omega^+(\phi_0)\subseteq \overline{\Gamma^+(\phi_0)}.
\]
Both $\omega^+(\phi_0)$ and $\overline{\Gamma^+(\phi_0)}$ are guaranteed to be compact sets, but they can be highly irregular. For the case at hand, where we study the Lorenz system,  this fact is well-known.

We want to use the results of this paper to understand what can be said about the regression problem in continuous time for this system, and we are particularly interested in what the error bounds imply for estimates of an observable function for this system. We would like to understand how the trajectory $t\mapsto \phi(t)$ affects convergence of approximations, and to determine in what spaces the continuous time regression problem converges. We can use either of the sets $\omega^+(\phi_0)$ or $\overline{\Gamma^+(\phi_0)}$  to study the convergence properties. 

In this paper,  we study the case when  the set $S=\overline{\Gamma^+(\phi_0)}$. We define different sets $S_N\subset S$ that have $N$ samples in $\Gamma^+(\phi_0)$, $S_N:=\{\xi_1,\ldots \xi_N\}:=\{\phi(t_1),\phi(t_2),\\ \ldots,\phi(t_N)\}$. We assume that these are nested, $S_{N}\subset S_{N+1}$.  Associated with $S_N$ we define the space of approximants in terms of a scattered basis with  $\sH_N:=\text{span}\{\knl_{\xi_1},\ldots, \knl_{\xi_N}\}$. These finite dimensional spaces of approximants are data driven: they are generated along a trajectory of the system. For each $N$, suppose that the PE condition holds for $\sH_N$. Whether or not the PE condition holds in the case that $\sV=\sH_N$ can be verified by conditions related to the visitation, or time of occupation, of the trajectory $\tau\mapsto \phi(\tau)$ in neighborhoods of the samples in $S_N$. See \cite{gpk2020pe,pgk2020center,pgk2020suff} for a discussion.   If the trajectory persistently excites the subspace $\sH_N$, we have the estimate from Equation \ref{eq:galerkinBound}
\begin{align*}
 \|\hat{g}_N(t,\cdot)&-\Pi_N G\|_{\sH_N}  \\ &\leq \left (\frac{\bar{\knl}^2 m \Delta}{\gamma_1 m + \gamma}\right )
 \|(I-\Pi_N)G\|_{\sH_N} + \frac{\gamma}{\gamma_1 m + \gamma} \|\Pi_N G\|_{\sH_N}. 
\end{align*}
Concrete estimates of the rate of convergence of this expression can be obtained using the power function $\calP_N(x)$ over the set $S=\overline{\Gamma^+(\phi_0)},$
\begin{equation}
    \calP_N(x):=|\knl(x,x)-\knl_N(x,x)| \quad \text{ for all } x\in S:= \overline{\Gamma^+(\phi_0)}
    \label{eq:powerFunction}
\end{equation}
where $\knl_N$ is the kernel that defines the native space $\sH_N$. It is well-known \cite{wendland,haasdonk} that the power function $\calP_N(x)$ enables the pointwise bound
\[
|((I-\Pi_N)h)(x)|\leq \calP_N(x) \|h\|_{\sH} \quad \text{ for all } x\in S:=\overline{\Gamma^+(\phi_0)}.
\]
This pointwise bound can be used to derive a corresponding bound on $\|(I-\Pi_N)h\|_\sH$, for smooth enough $h$. The details exceed the length of this brief paper and are given in \cite{begklpp2022} in a  different  application to approximation of Koopman operators, or this bound can be inferred from the proof of Theorem 11.23   in \cite{wendland}.  Ultimately, we obtain a bound
\[
\|(I-\Pi_N)h\|_{\sH} \lesssim  \left (\int_{S} |\calP_N(x)|^2 dx \right )^{1/2} \|h\|_\sH
\]
for all $h\in \sH$ that are smooth enough, which means that the optimal regression estimate in continuous time  satisfies 
\begin{align}
 \|&\hat{g}_N(t,\cdot)-\Pi_N G\|_{\sH_S}  \\ &\leq \left(\left (\frac{\bar{\knl}^2 m \Delta}{\gamma_1 m + \gamma} \right )
\|\calP_N\|_{L^2(S)}\right)\|G\|_{\sH_S} +  \frac{\gamma}{\gamma_1 m + \gamma}\|\Pi_N G\|_{\sH_S}. 
\label{eq:powerBound}
\end{align}
for all $G$ smooth enough. 

We begin with an assessment of the numerical implementation of Method (1). We illustrate the performance of an  estimate $\hat{g}_N(t,\cdot)$ defined in Equation \ref{eq:approxN1} of $ \Pi_N G$  generated over an orbit $\Gamma(\phi_0)$ of the Lorenz system. We only pose the regression problem over a projection of the orbit $\Gamma(\phi_0)$ onto the $x-y$ plane so that the results are easy to visualize. The function $G$ we estimate is given by
$$ G(x,y) = -10 \text{sin}\biggl( \frac{y}{10} \biggr) + \frac{(x+y)^3}{2000}+200. $$ and we choose the initial condition $\phi_0 = \{1,1,1\}^T$. In this example, we use the Matern-Sobolev kernel, 
\begin{equation*}
    \knl(\xi_{N,j},x) = \bigg(1+ \frac{\sqrt{3}\|x-\xi_{M,j}\|_2^2}{\beta }\bigg)e^{\big(-\frac{\sqrt{3}\|x-\
    \xi_{M,j}\|_2^2}{\beta }\big)}
\end{equation*}
with $\| \cdot \|_2$ the standard Euclidean norm over  $\RR^2$, $j \in {1,...,N}$, and the hyperparameter $\beta = 5$. In Figure \ref{fig:kernelEstimate}, the output $G(\phi(t))$ for $t \geq 0$ is represented by the red curve that hovers over the dynamics of the input orbit labeled by the black curve $\phi(t)$ in the $x - y$ plane. In this figure, 126 kernel centers are selected quasi-uniformly along the orbit with a separation distance of around 2 and the regularization parameter $\gamma = 0.1$.
\begin{figure}[ht!]
    \centering
    \includegraphics[width=0.9\textwidth]{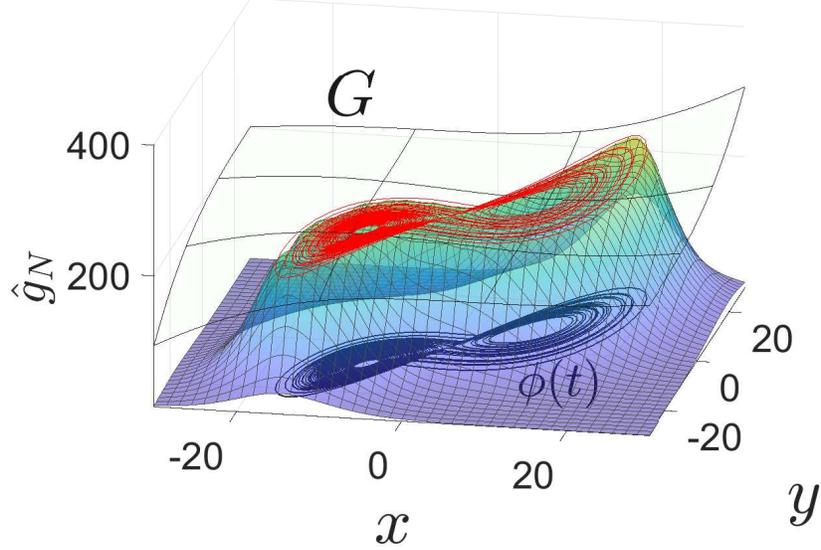}
    \caption{An illustration of the kernel estimate $\hat{g}_N(t,\cdot)$ of $G$  for $t = 200$ seconds defined over the orbit $\phi(t)$ starting at some initial conditions $\phi_0 = \{1,1,1\}^T$. The orbit is generated by projecting the Lorenz system dynamics onto the x-y plane. The output $G(\phi(t))$ for $t \geq 0$ is represented by the red curve hovering over the dynamics of the input orbit $\phi(t)$. The estimate $\hat{g}_N(t,\cdot)$ represented by the colored mesh minimizes its error from the projection of the true function $G$ (the curved surface) over the orbit $\phi(t)$.}
    \label{fig:kernelEstimate}
\end{figure}

From the figure, it is clear that the approximation $\hat{g}_N(t,\cdot)$ represented by the colored mesh yields, qualitatively speaking, a good estimate of the true function $G$ (the green surface) over the orbit.

When interpreting this result, it is important to keep several facts in mind. 

\noindent \underline{Observations:}
\begin{enumerate}
    \item 
The theory in this paper uses the   compact subset $S:=\overline{\Gamma(\phi_0)}$, whose regularity is not easy to characterize. The set $\overline{\Gamma(\phi_0)}$ defines  the space $\sH_S$ in which regression approximations in continuous time converge.  
\item 
The convergence of estimates is in the space $\sH_S$, which is an RKHS space of funnctions over the set $X$. Even though $S$ is quite irregular, the functions in $\sH_S$ are supported on the whole set $X$, not just $S$. This means that estimate is, in a sense, ``naturally extended'' to the whole state space $X$. In the case at hand, the set $S$ has zero Lebesgue measure.  Even though the trajectory or orbit  may not reach some points or subsets of $\RR^2$,  the function estimates are well-defined everywhere nonetheless. 
\end{enumerate}

\subsection{Example: Characteristics of Approximation Method (1)}
The next set of results examines the estimates of approximation Method (1) over orbits spanning  different intervals of time. Using the same underlying input dynamics, kernel function, hyperparameter $\beta$, regularization parameter $\gamma$, and unknown function $G$ from the previous results, each of the estimates are generated from an orbit starting at an initial condition $\phi_0 = \{1,1,1\}^T$. The centers are placed quasi-uniformly along the orbit with a separation distance of around 2. Figures \ref{fig:estimate10s-1} through \ref{fig:estimate200s-1} illustrate the estimates calculated over different spans of time. In Figure \ref{fig:estimate10s-1}, we can see that, even for small time intervals, the error between the estimate $\hat{g}_N(t,\cdot)$ and the true function $G$ begins to diminish significantly over the orbit. Additionally, there is significant decrease in the error over the trajectory as seen in Figures \ref{fig:estimate10s-1} and \ref{fig:estimate20s-1}. While the error continues to decrease as $t \to \infty$, it is evident the error reduction between Figures \ref{fig:estimate50s-1} and \ref{fig:estimate200s-1} occurs at a much smaller rate than the previous time intervals. This is a consequence of diminishing rate of reduction in the power  function $\calP_N(x) |_{x\in \overline{\Gamma(\phi_0)}}$ as $t \to \infty$ and more samples are collected.
\begin{figure}[ht!]
    \centering
    \begin{subfigure}[b]{0.47\textwidth}
    \includegraphics[width=\textwidth]{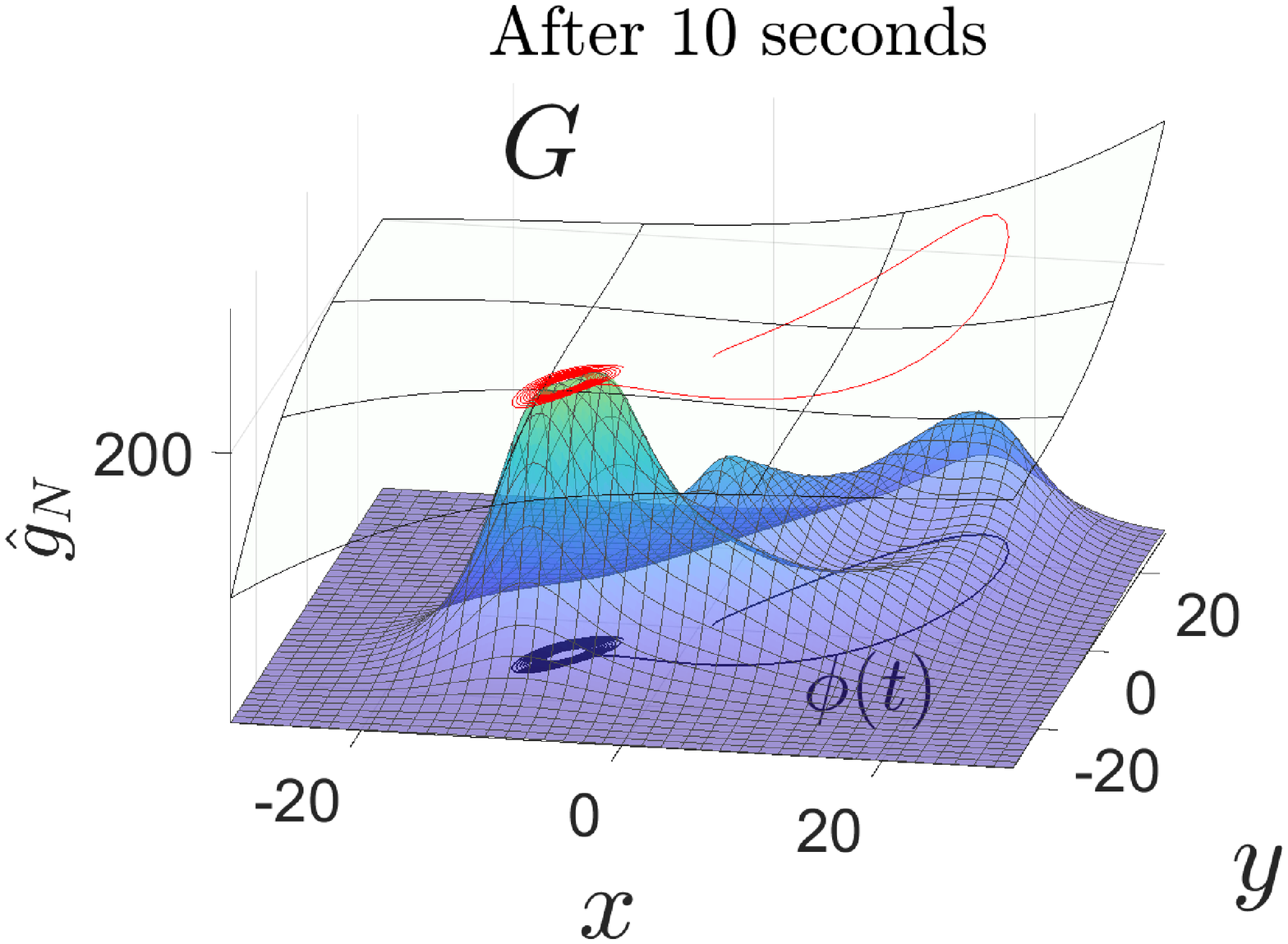}
    \caption{}
    \label{fig:estimate10s-1}
    \end{subfigure}
    \hfill
    \begin{subfigure}[b]{0.47\textwidth}
    \includegraphics[width=\textwidth]{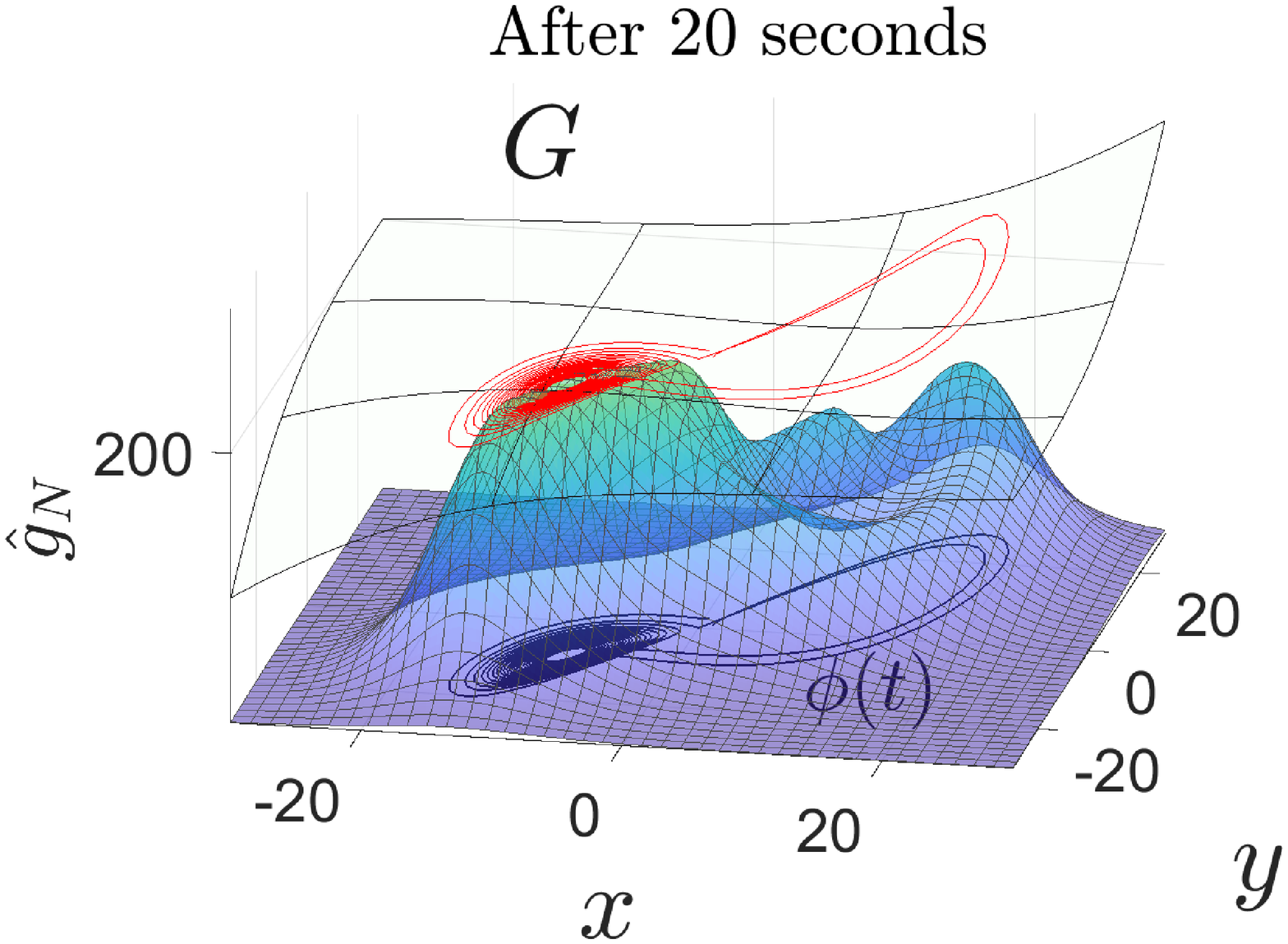}
    \caption{}
    \label{fig:estimate20s-1}
    \end{subfigure}
    \begin{subfigure}[b]{0.47\textwidth}
    \includegraphics[width=\textwidth]{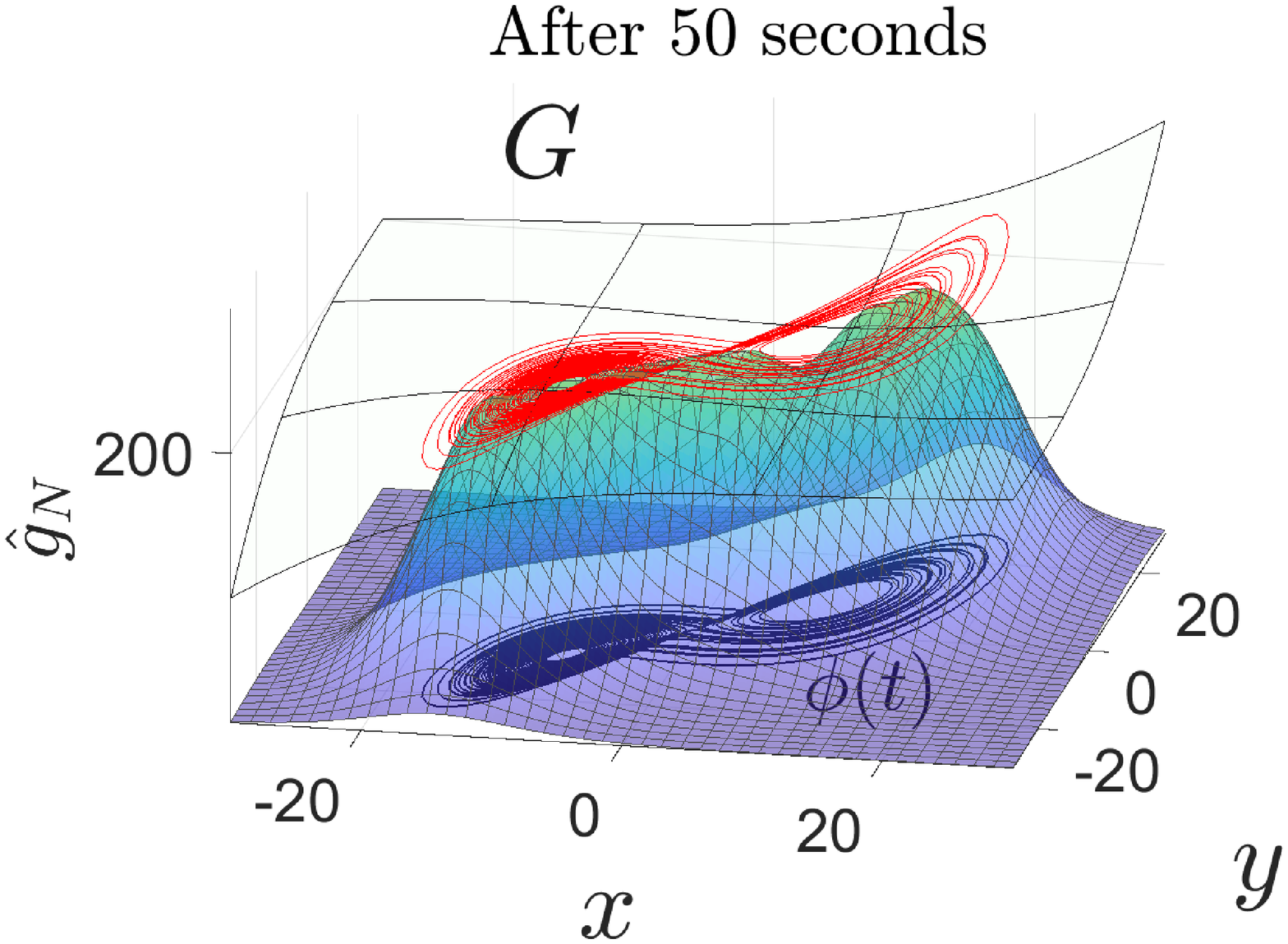}
    \caption{}
    \label{fig:estimate50s-1}
    \end{subfigure}
     \hfill
    \begin{subfigure}[b]{0.47\textwidth}
    \includegraphics[width=\textwidth]{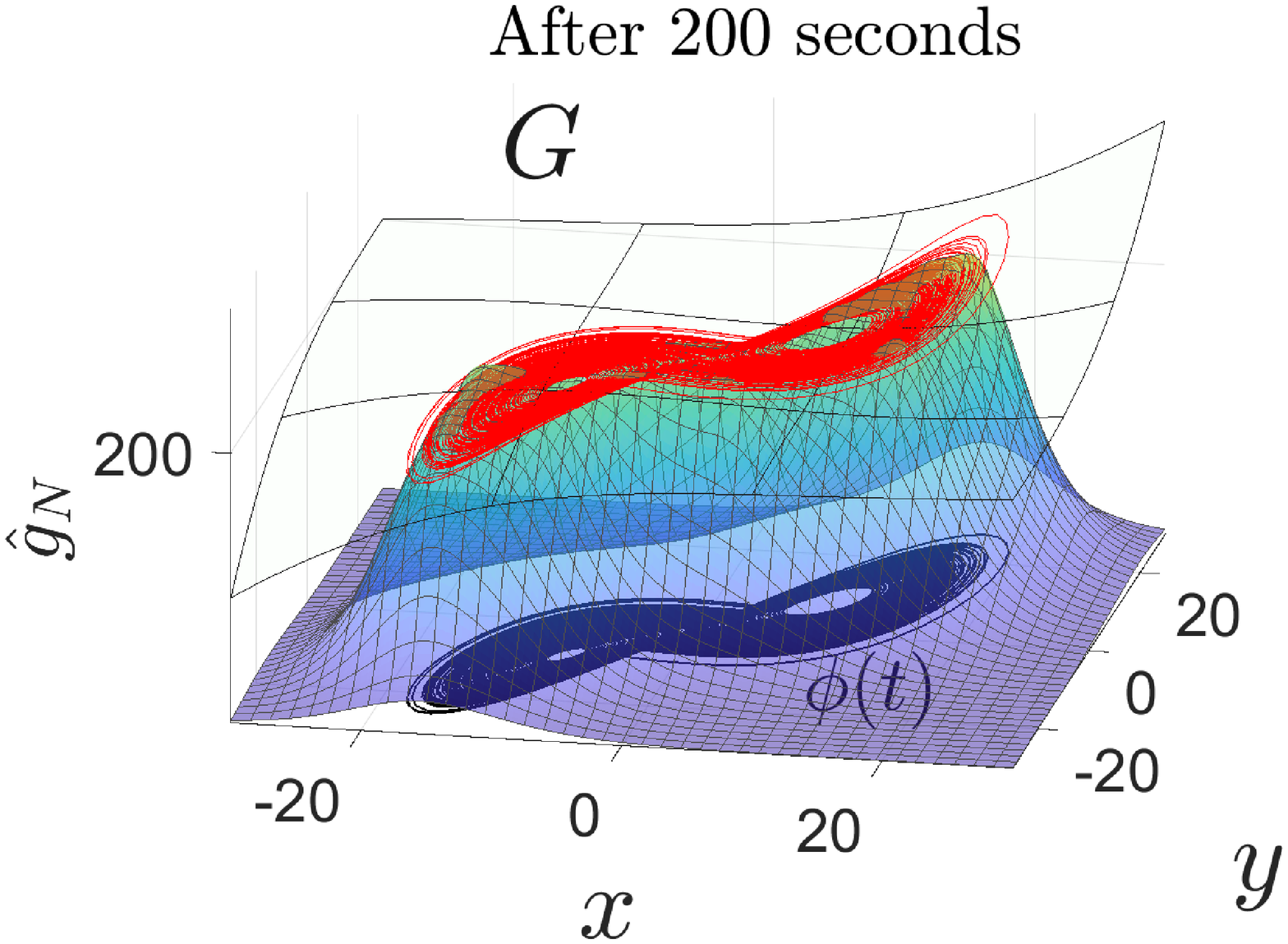}
    \caption{}
    \label{fig:estimate200s-1}
    \end{subfigure}
    \caption{An illustration of the various approximations from Equation \ref{eq:approxN1} over an orbit after spanning different amounts of time from the same system dynamics and initial conditions. It is evident that the error $\|\Pi_{N} G-\hat{g}_N(t,\cdot)\|$ converges to zero over the domain of attraction as more time is passed and more samples are collected.}
\end{figure}

For the next set of figures, we examine the effects of the regularization penalty term of $E(t,g;\phi)$ used in approximation Method (1) by varying the choices of $\gamma$. In order to examine the influence of the regularization parameter on the long-term convergence behavior, each simulation is run over a sufficiently long time interval of 200 seconds. Using the same Lorenz system, kernel function, hyperparameter $\beta$, unknown function $G$, initial conditions, and center spacing as the previous results, Figures \ref{fig:estimateGamma10-1} through \ref{fig:estimateGammaP001-1} illustrate the estimates for different values of $\gamma$. Overall, these four graphs depict qualitative behavior that is well-known in the field of inverse problems. The error functional $E$ introduced in Section \ref{sec:offlineoptimal} balances two terms 
\begin{equation*}
    E(t,g;\phi) := \underbrace{\frac{1}{2}\int|y(\tau)-\EE_{\phi(\tau)}g|^2 \nu(d\tau)}_{\text{term 1}} + \underbrace{\frac{1}{2}\gamma \|g \|_{\sH}^2}_{\text{term 2}}.
\end{equation*}
Minimizing term 1 decreases the error over the orbit $\Gamma(\phi_0)$, while term 2 penalizes the size of the estimate as measured in the $\| \cdot \|_{\sH}$ norm. Increasing $\gamma$ generally leads to smoother estimates. Additionally, it also reduces the chance of over-fitting, which can lead to poor estimates in the presence of a noisy or perturbed data set. This classic phenomenon is studied in great depth in texts like \cite{engl1996regularization}. With these considerations in mind, proper estimates can be generated by selecting a particular choice of $\gamma$ to effectively regularize the estimate.
\begin{figure}[ht!]
    \centering
    \begin{subfigure}[b]{0.47\textwidth}
    \includegraphics[width=\textwidth]{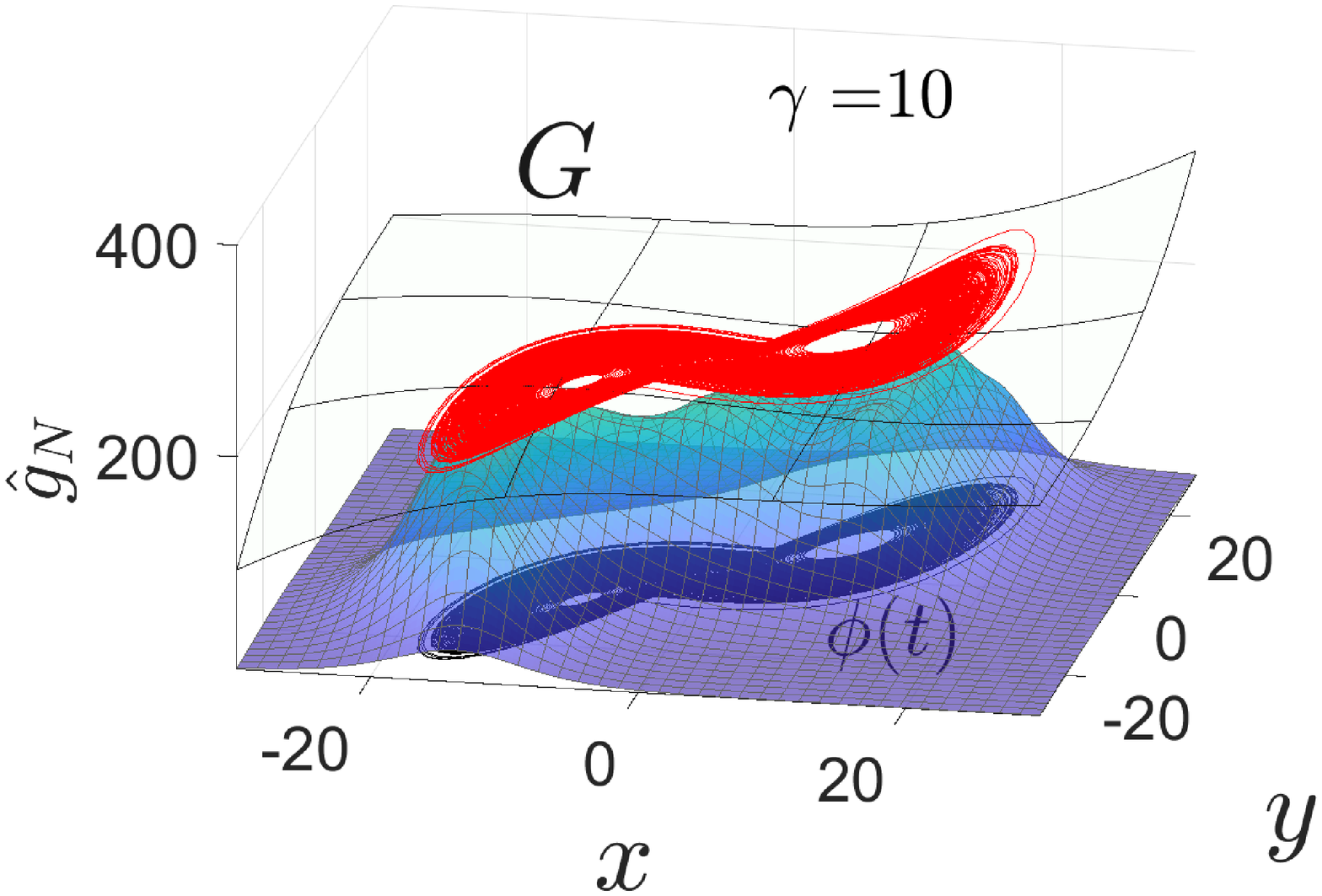}
    \caption{}
    \label{fig:estimateGamma10-1}
    \end{subfigure}
    \begin{subfigure}[b]{0.47\textwidth}
    \includegraphics[width=\textwidth]{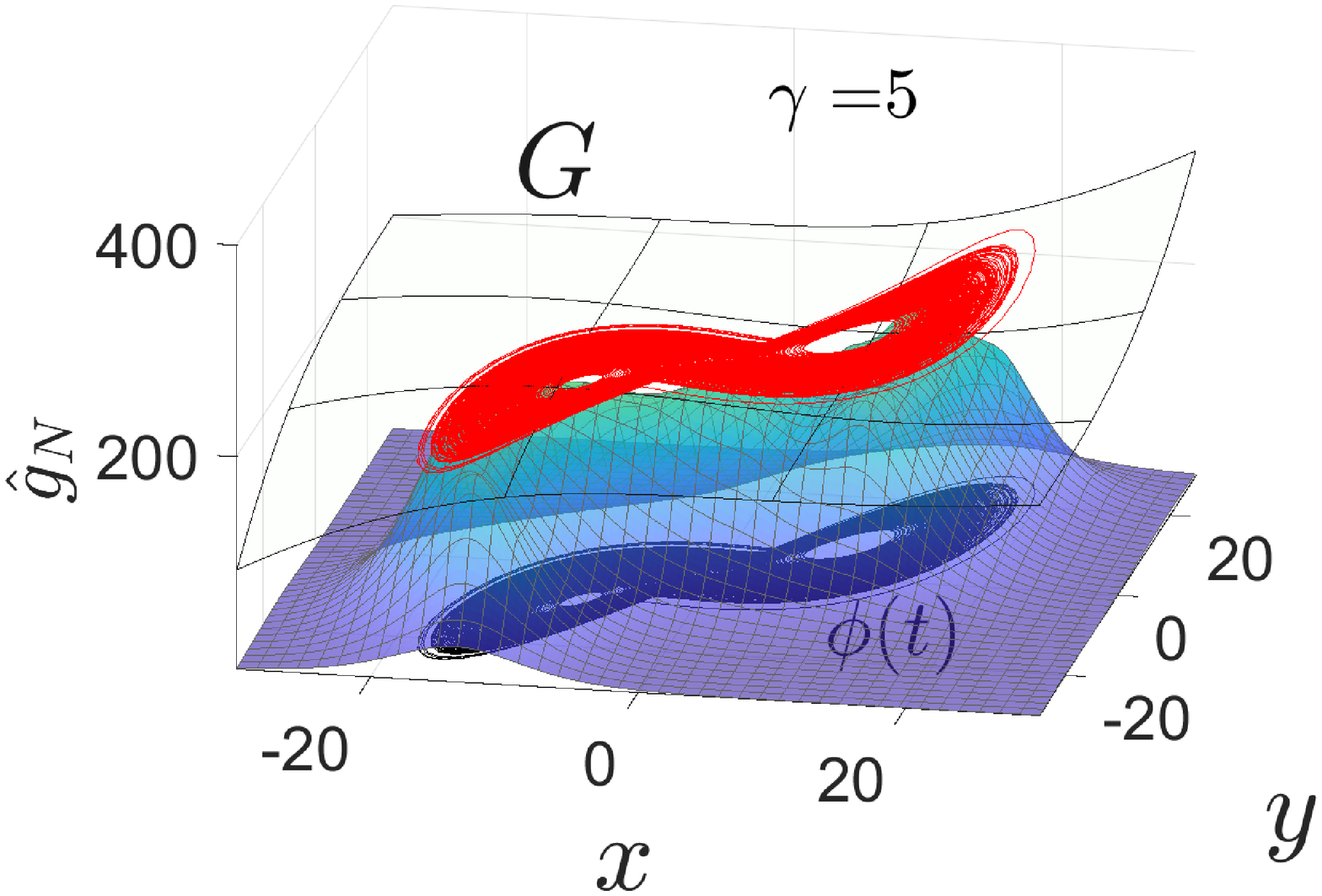}
    \caption{}
    \label{fig:estimateGamma5-1}
    \end{subfigure}
    \hfill
    \begin{subfigure}[b]{0.47\textwidth}
    \includegraphics[width=\textwidth]{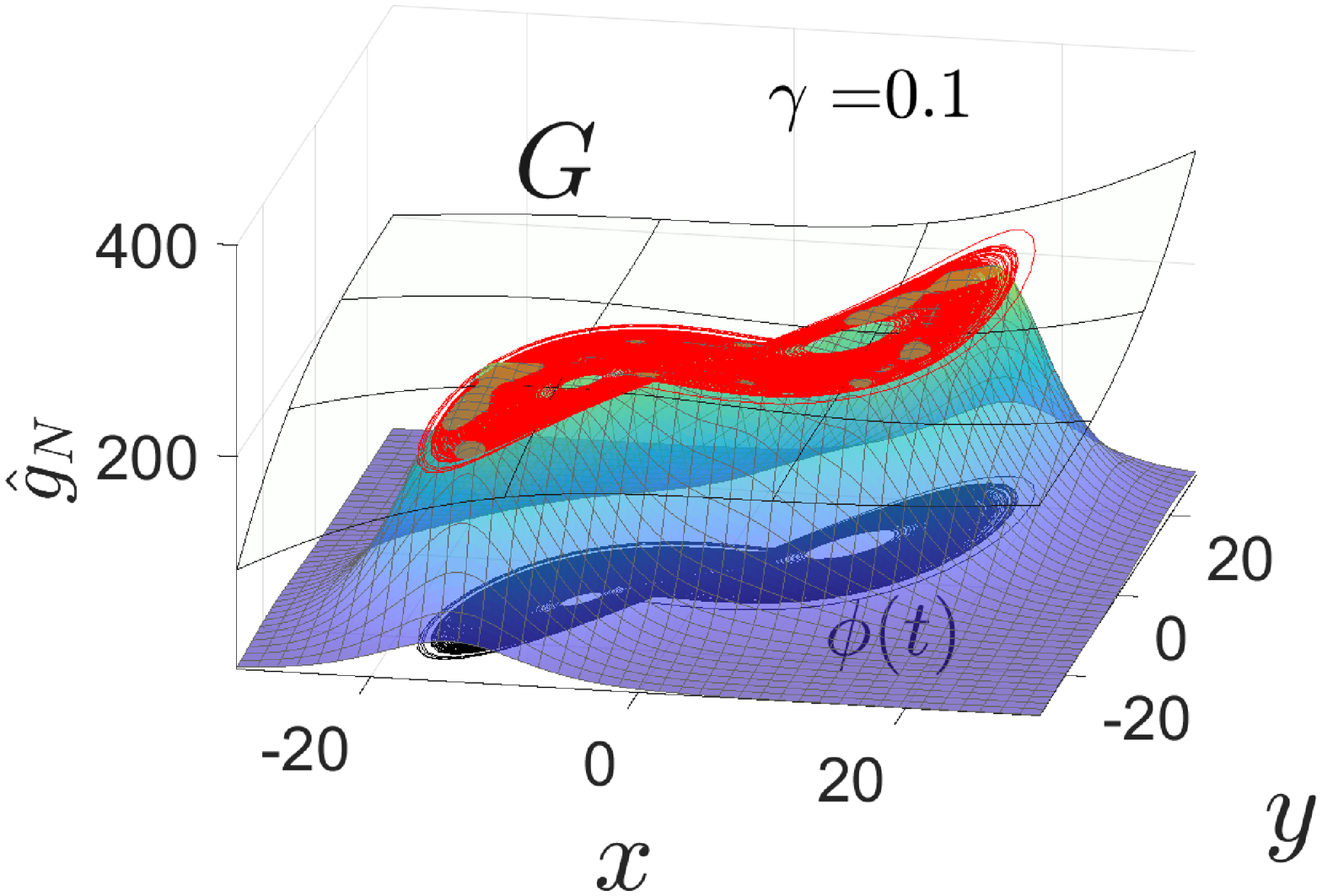}
    \caption{}
    \label{fig:estimateGammaP1-1}
    \end{subfigure}
    \begin{subfigure}[b]{0.47\textwidth}
    \includegraphics[width=\textwidth]{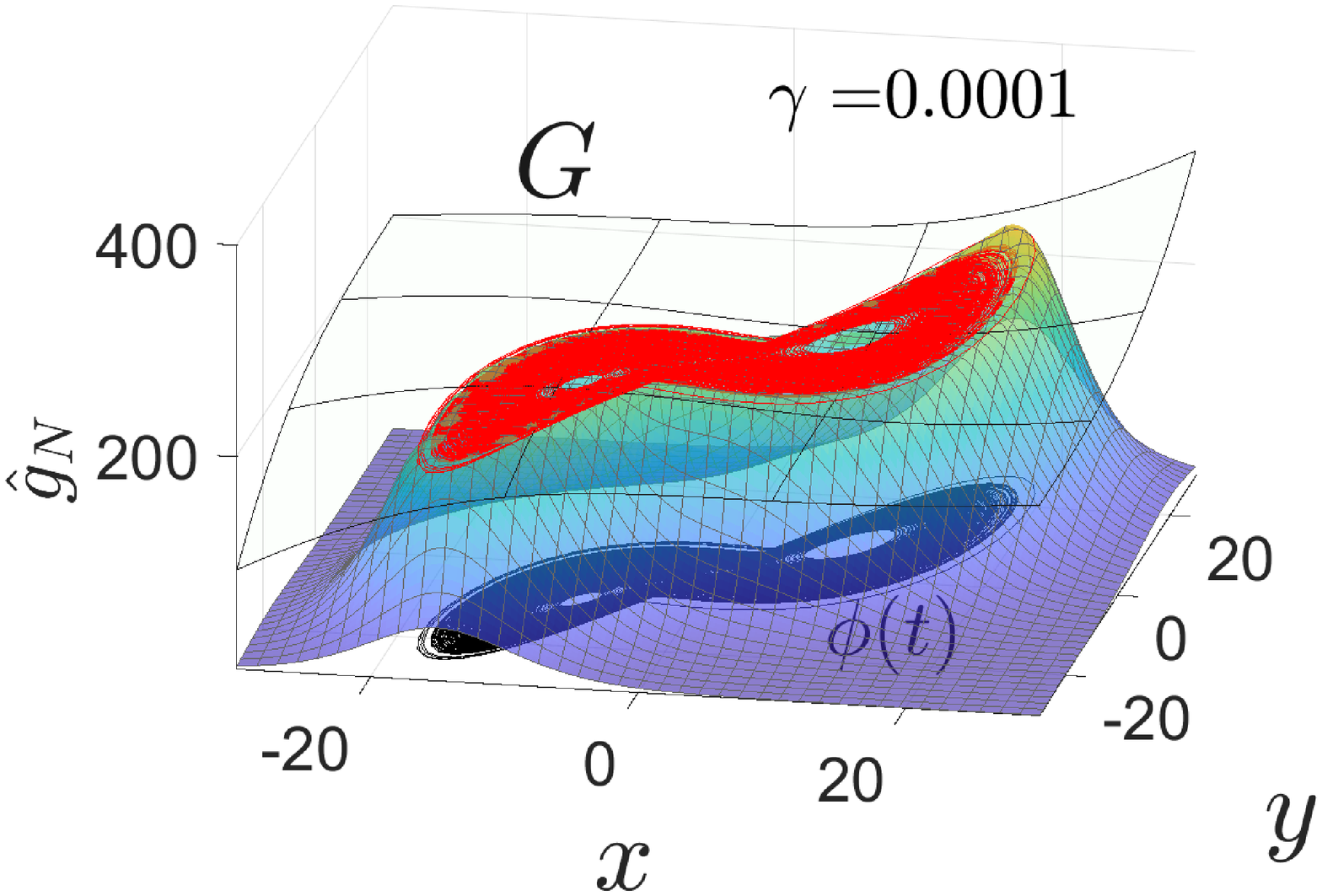}
    \caption{}
    \label{fig:estimateGammaP001-1}
    \end{subfigure}
    \caption{Estimates $\hat{g}_N(t,\cdot)$ of the function $G$ for different values of $\gamma$ using the same time-span, initial condition, and underlying dynamics. Notice that for larger $\gamma = 20$, the estimate has a slower convergence over the domain of attraction. While estimates using smaller $\gamma$, may have reduced error over the orbit, the estimate can potentially over-fit noisy training data.}
\end{figure}

\subsection{Example: Characteristics of Approximation Method (2)}
The next set of results examine how the estimates generated using approximation Method (2) vary in time and converge as $t \to \infty$. Using the same conditions as the previous results, the estimate was initialized with all coefficients $\alpha_i = 0$ for $1 \leq i \leq N$ and the states initial condition $\phi_0 = \{1,1,1\}^T$. Figures \ref{fig:estimate10s-2} through \ref{fig:estimate200s-2} illustrate the estimate generated after different amounts of time. In contrast to the previous approximation method, these figures depict the evolution of a single estimate as $t \to \infty$.  As mentioned previously, this approximation is determined by an evolution of coefficients $\{\hat{\alpha}_{N,i}(t)\}_{i=1}^N$. The evolution law minimize the integrated error for $\hat{g}_N(\tau) =\sum_{i=1}^N \hat{\alpha}_{N,i}(\tau)\knl_{\xi_{N,i}}(\cdot)$ over the orbit for time $\tau \in [0,t]$ rather than computed an offline optimal solution at a specific time.  At early stages of the evolution such as Figure \ref{fig:estimate10s-2}, samples predominantly aggregate near one of the unstable equilibrium points. However, as the trajectory approaches the second equilibrium, it begins to influence the coefficients of the nearby centers and decrease the error between the estimate $\hat{g}_N(t,\cdot)$ and $G$. Figures \ref{fig:estimate50s-2} and \ref{fig:estimate200s-2} suggest that the estimate converges to the projection $\Pi_N G$ over the domain of attraction as more time passes and more samples are collected. Similar to the estimate from approximation Method (1), there is an initial rapid change in the error over the trajectory. While the error appears to decrease for longer periods of time, it is evident that  the error reduction between Figures \ref{fig:estimate50s-2} and \ref{fig:estimate200s-2} occurs at a much lower rate than the reduction seen from Figure \ref{fig:estimate10s-2} to \ref{fig:estimate20s-2} and Figure \ref{fig:estimate20s-2} to \ref{fig:estimate50s-2}.

\begin{figure}[ht!]
    \centering
    \begin{subfigure}[b]{0.47\textwidth}
    \includegraphics[width=\textwidth]{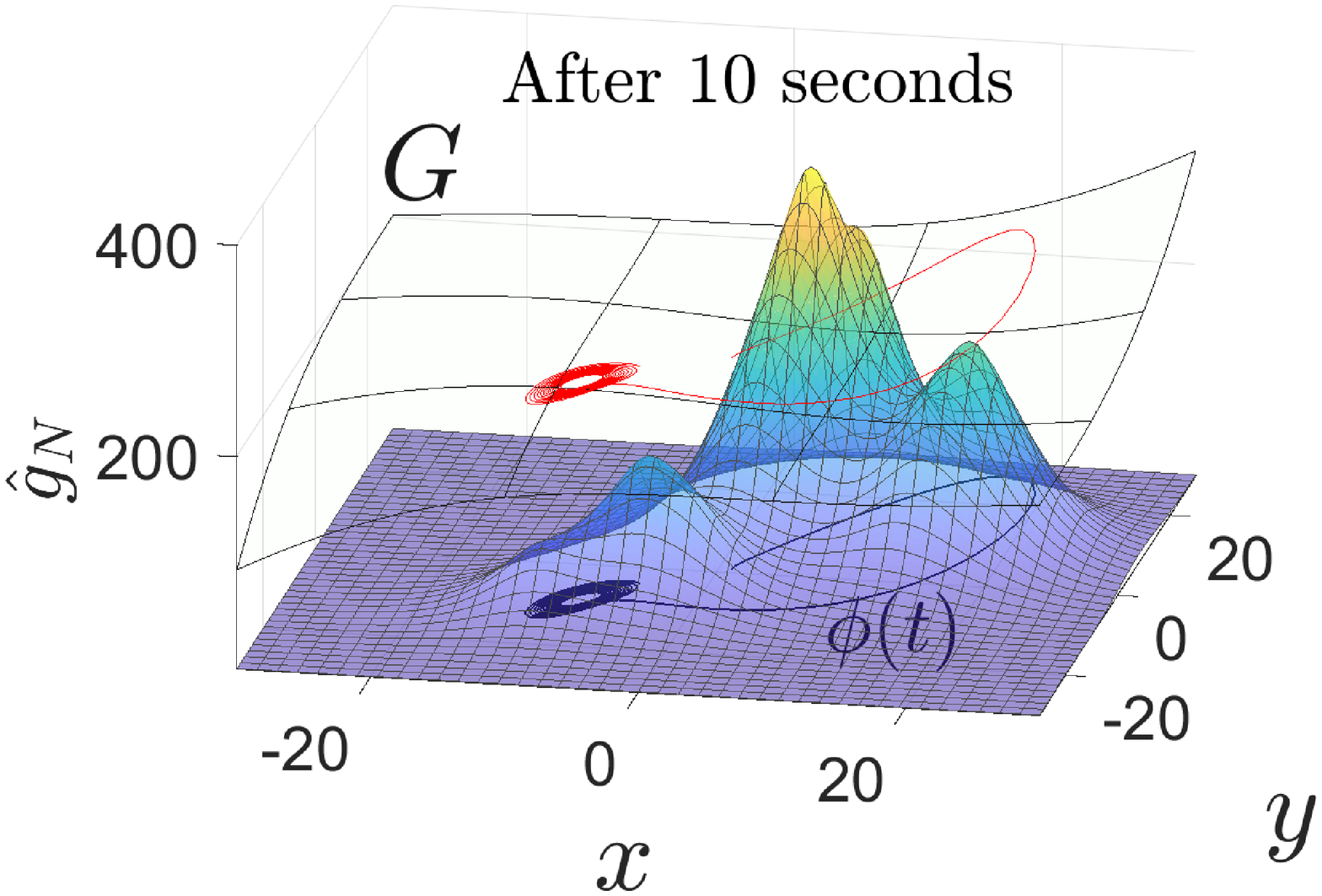}
    \caption{}
    \label{fig:estimate10s-2}
    \end{subfigure}
    \hfill
    \begin{subfigure}[b]{0.47\textwidth}
    \includegraphics[width=\textwidth]{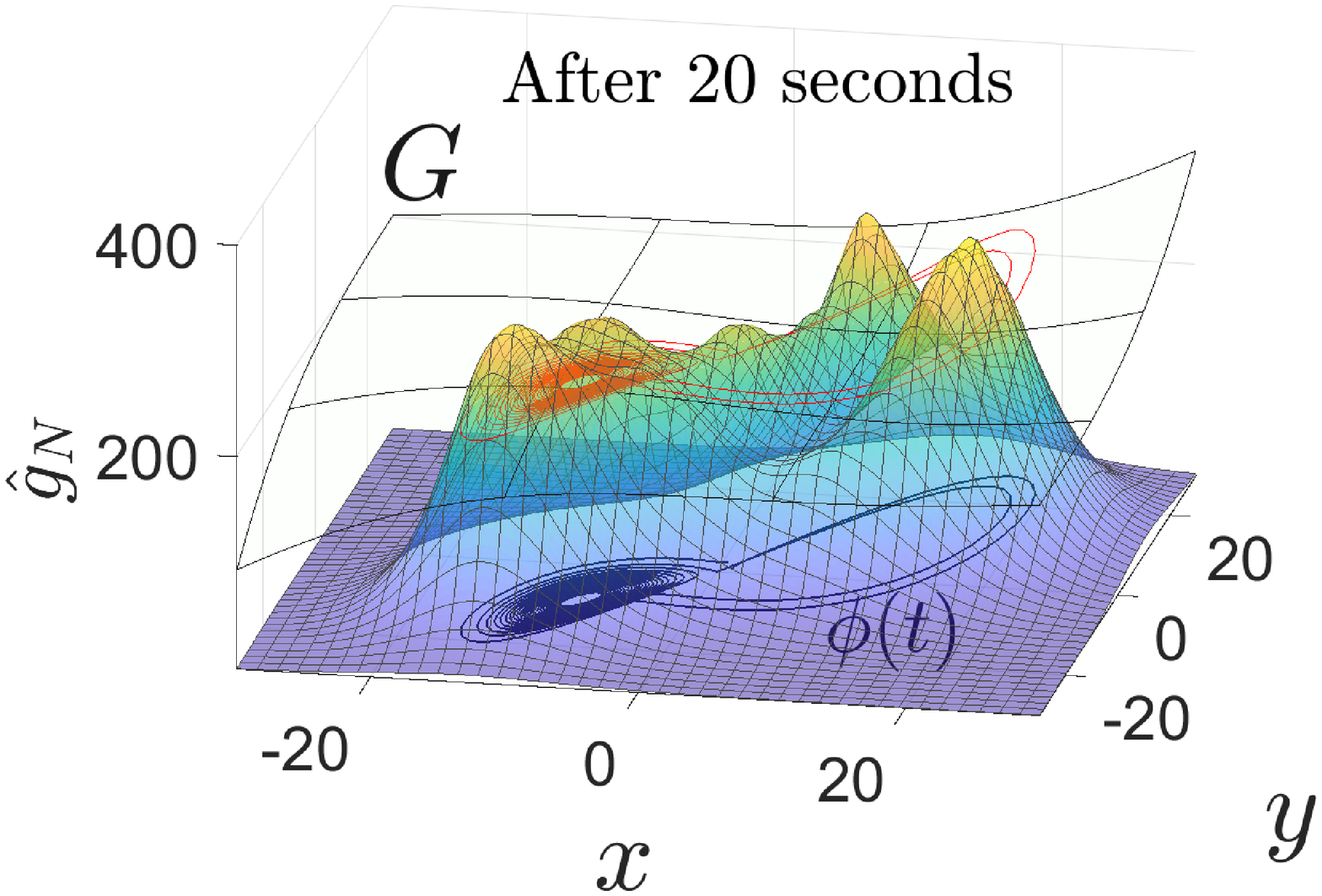}
    \caption{}
    \label{fig:estimate20s-2}
    \end{subfigure}
    \begin{subfigure}[b]{0.47\textwidth}
    \includegraphics[width=\textwidth]{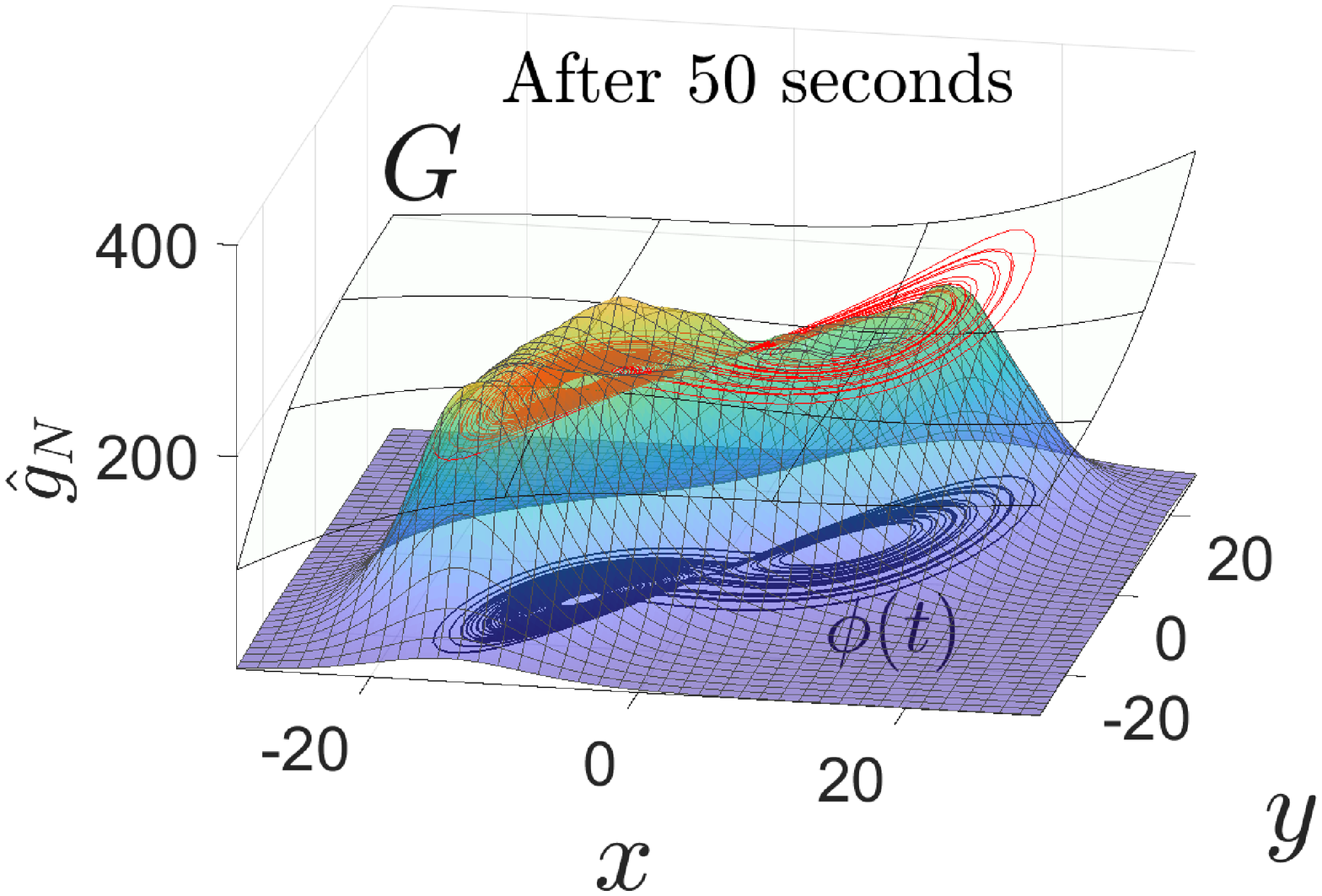}
    \caption{}
    \label{fig:estimate50s-2}
    \end{subfigure}
     \hfill
    \begin{subfigure}[b]{0.47\textwidth}
    \includegraphics[width=\textwidth]{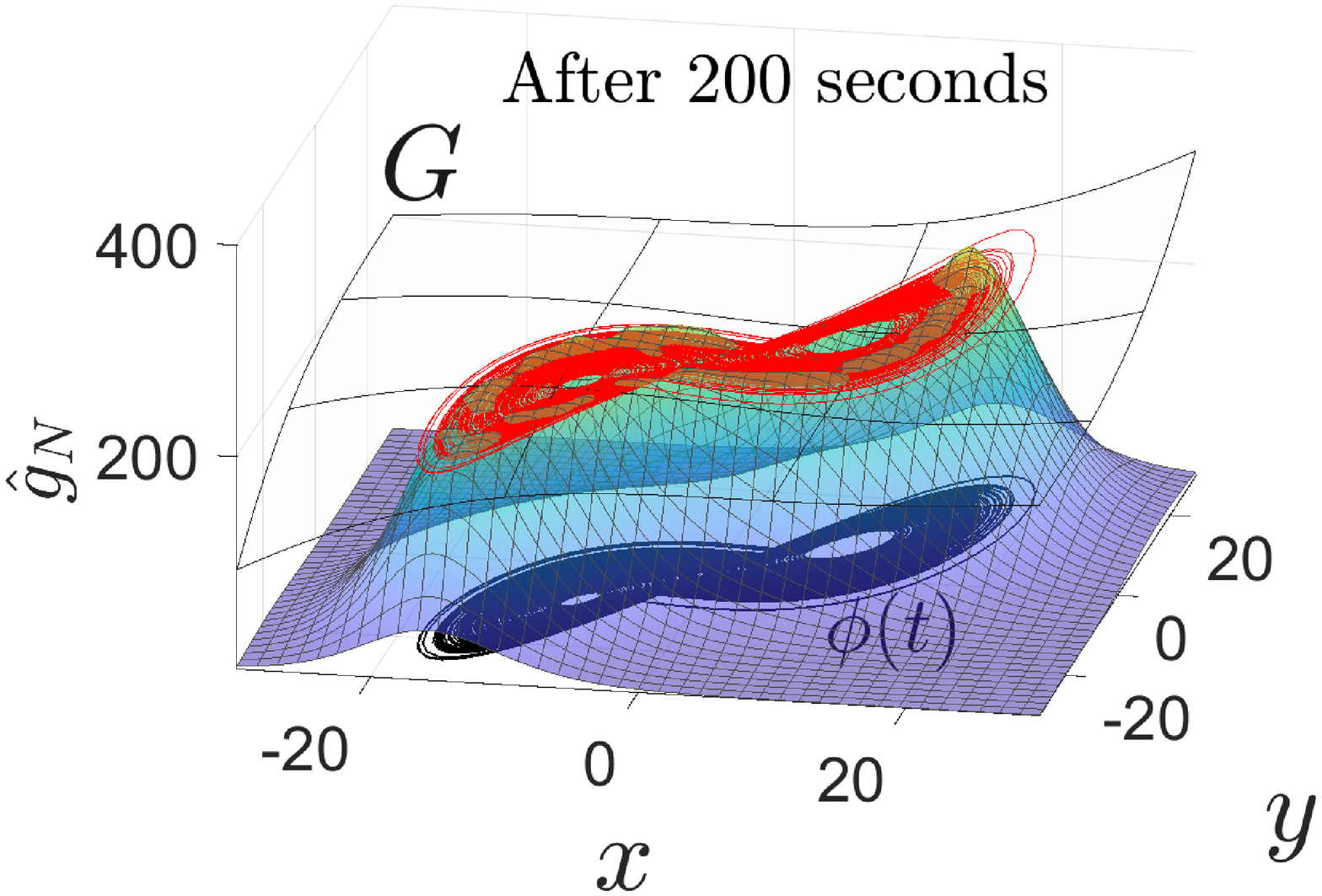}
    \caption{}
    \label{fig:estimate200s-2}
    \end{subfigure}
    \caption{An illustration of the evolution approximation from Equation \ref{eq:approxN2} over an orbit as time $t \to \infty$. The error  $\|\Pi_{N} G-\hat{g}_N(t,\cdot)\|$ converges over the domain of attraction as more time is passed and more samples are collected.}
\end{figure}

Using the same conditions as those used in the regularization study of approximation Method (1), we also examine the effects of the regularization term on the estimate from approximation Method (2) by building estimates using different choices of $\gamma$. Figures \ref{fig:estimateGamma10-2} through \ref{fig:estimateGammaP0001-2} illustrate the estimates for different values of $\gamma$.  In these estimates, the $\gamma$ term also plays a role in the transient response of the coefficients' evolution. From the figures, it is evident that a larger $\gamma$ decreases the sensitivity to changes in the estimate as new samples are collected over time. Consequently, the numerical study suggests better convergence of the estimates may require a larger number of samples when $\gamma$ is large. 
  By decreasing $\gamma$, the estimates are more responsive to changes in the data as seen in Figure \ref{fig:estimateGammaP1-2}. However, even without noise or disturbances, smaller values of $\gamma$ can yield large oscillatory behavior in the estimate as seen in Figure \ref{fig:estimateGammaP0001-2}. This example indicates that $\gamma$ must be carefully selected for a desired transient response in the second approximation method.
\begin{figure}[ht!]
    \centering
    \begin{subfigure}[b]{0.47\textwidth}
    \includegraphics[width=\textwidth]{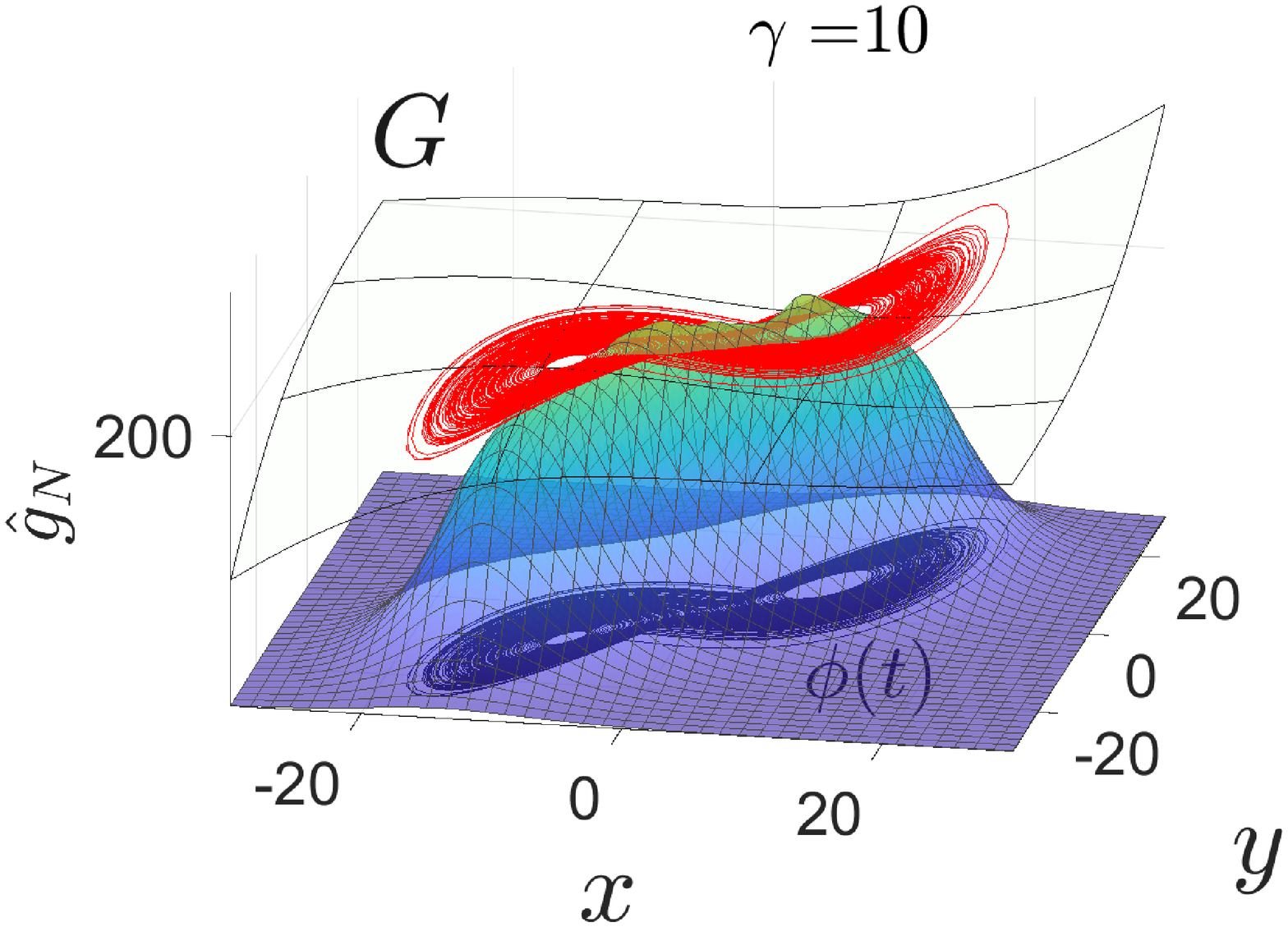}
    \caption{}
    \label{fig:estimateGamma10-2}
    \end{subfigure}
    \begin{subfigure}[b]{0.47\textwidth}
    \includegraphics[width=\textwidth]{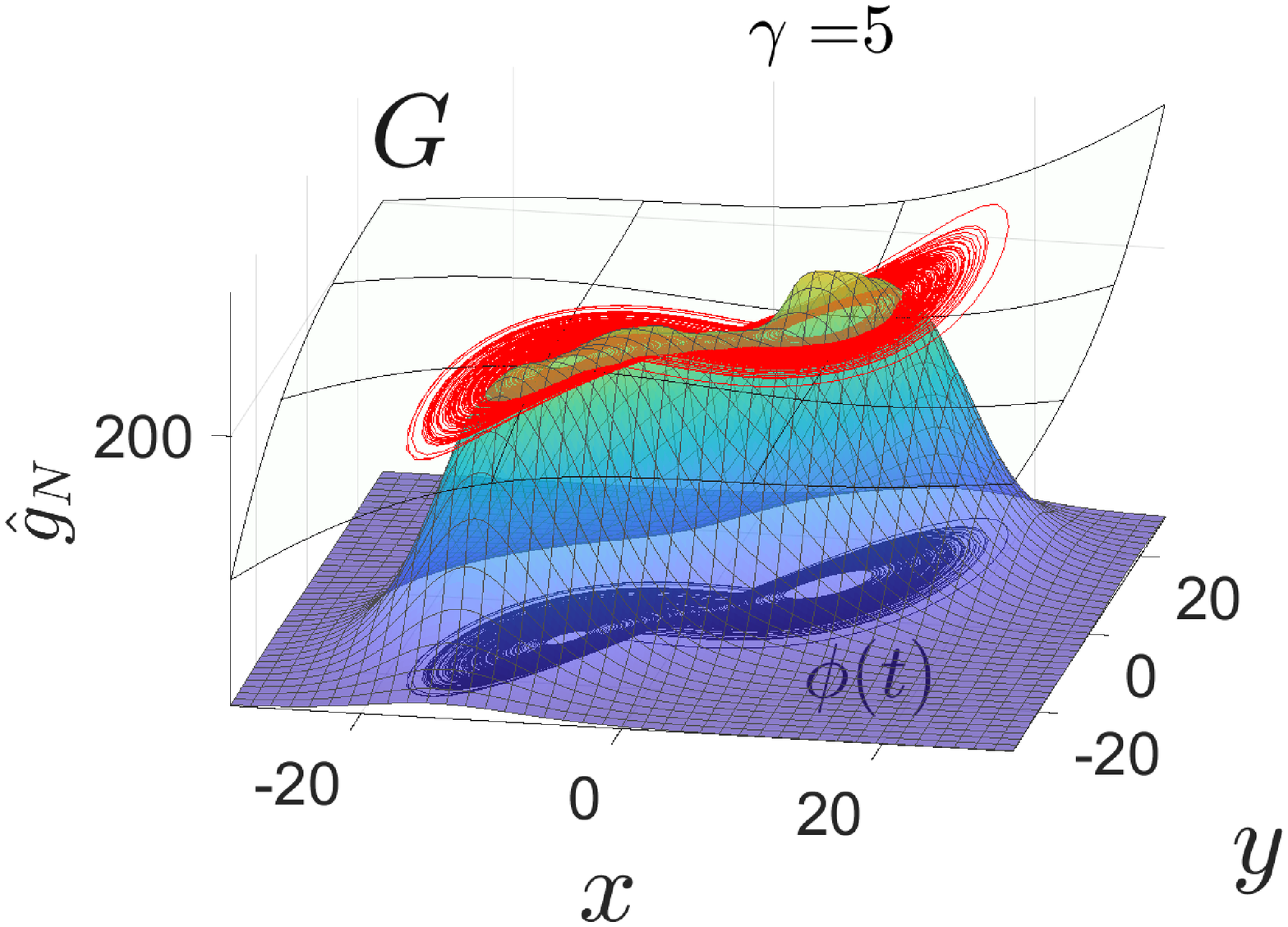}
    \caption{}
    \label{fig:estimateGamma5-2}
    \end{subfigure}
    \hfill
    \begin{subfigure}[b]{0.47\textwidth}
    \includegraphics[width=\textwidth]{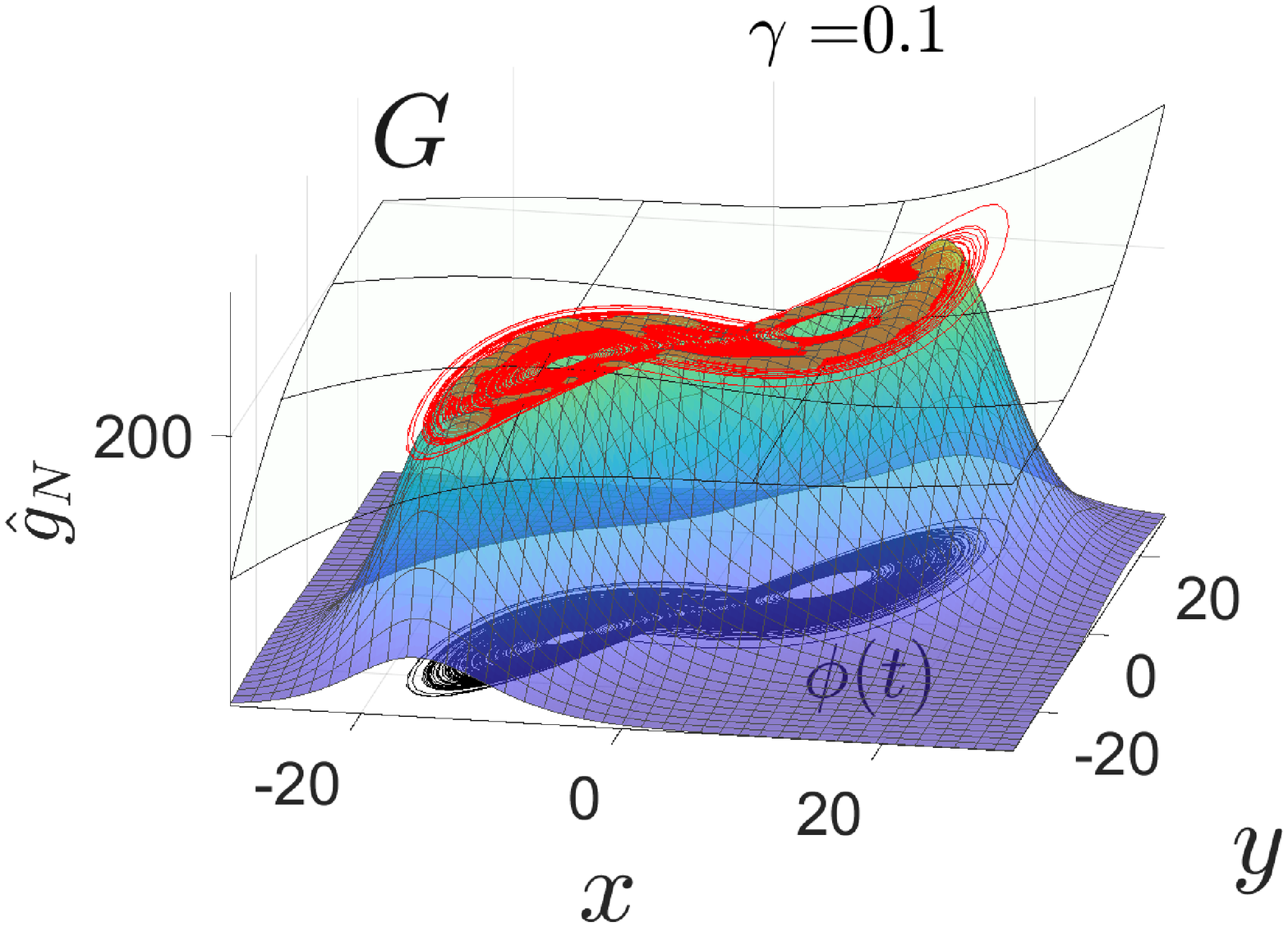}
    \caption{}
    \label{fig:estimateGammaP1-2}
    \end{subfigure}
    \begin{subfigure}[b]{0.47\textwidth}
    \includegraphics[width=\textwidth]{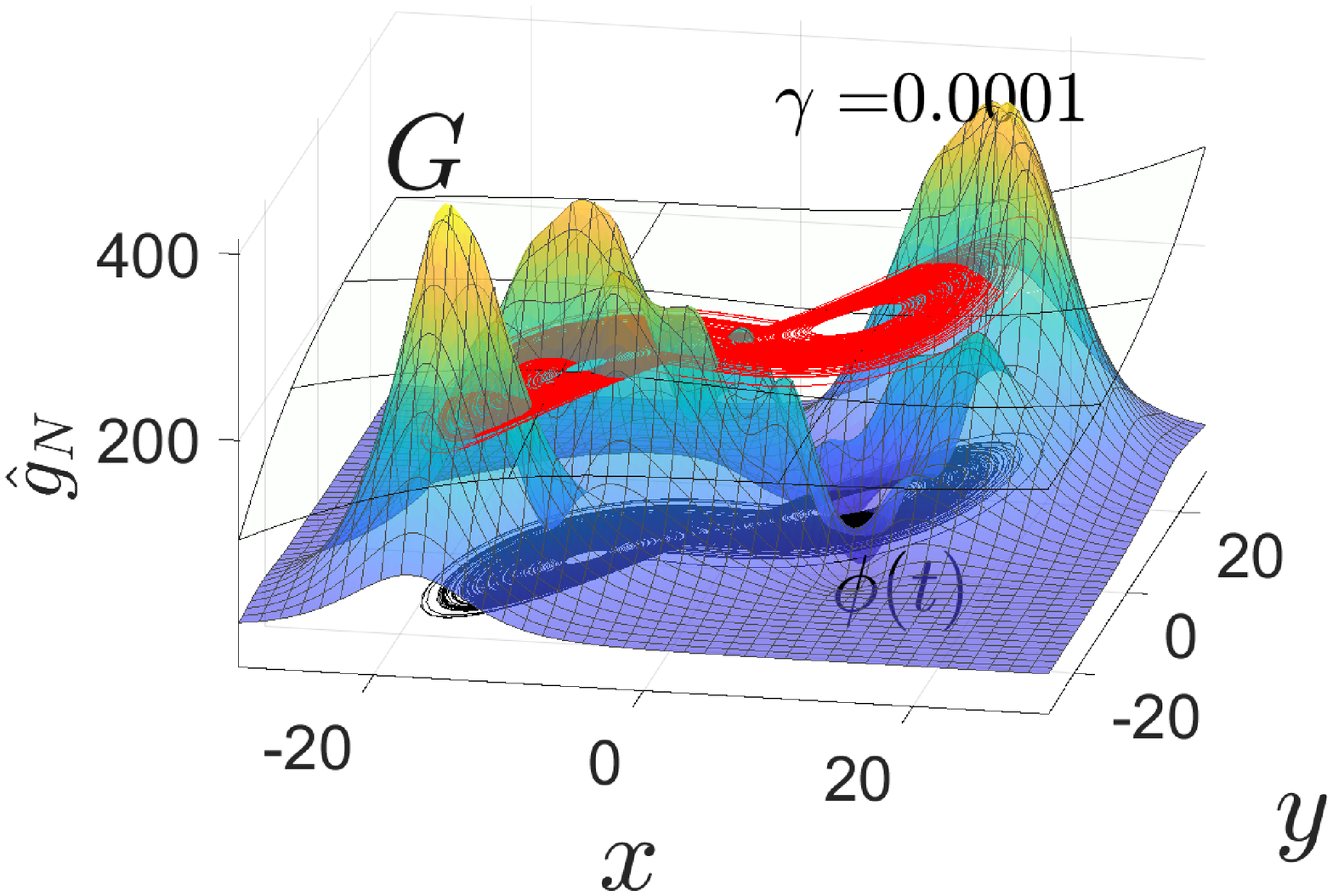}
    \caption{}
    \label{fig:estimateGammaP0001-2}
    \end{subfigure}
    \caption{Demonstration of estimates $\hat{g}_N(t,\cdot)$ of the function $G$ for different values of $\gamma$ using the same time-span, initial condition, and underlying dynamics. Notice that the larger $\gamma = 20$ has a slower convergence over the domain of attraction. For smaller $\gamma$, such the estimate generated using $\gamma = 0.001$, the estimate suffers from less penalty on the regularization of the estimate. However, the estimate can over-fit noisy training data.}
\end{figure}
\subsection{Example: Human Kinematics Study}
This example uses three-dimensional motion capture data from a subject running along a treadmill \cite{fukuchi2018public}. From the experiment, 17,000 marker coordinates are collected relative to a fixed inertial frame defined by the camera's position. For this example, a small candidate kinematic model is defined in terms of the full collection of experimental trajectories. The marker coordinates of the hip, knee, and ankle in the full data set are projected onto the sagittal plane that divides the left and right half of the body, see Figure \ref{fig:kinematicFigure}. 

With this projection, the first input is defined to be the joint angle  $\theta^{(1)}$. It is measured between the projected vector $v^{(1)}$ that connects the hip to knee and the body-fixed $b^{(1)}$ vector in the plane, and it roughly corresponds to hip flexion. The second input comes from the knee flexion angle $\theta^{(2)}$. It is measured between the vector $v^{(1)}$ and $v^{(2)}$, the vector connecting the knee to the ankle projected to the sagittal plane. These choices of projections and associated degrees of freedom are taken to define a low-dimensional but unknown dynamic system. We seek to estimate outputs of the unknown dynamic system.

For illustrative purposes, we chose to estimate the projection $\Pi_N G$ of the kinematic map $G$ from  the joint angles $\theta^{(1)}$ and $\theta^{(2)}$ to the ankle coordinate associated with the body-fixed $b^{(2)}$ vector. Figure \ref{fig:kernelMocapEstimate1} and \ref{fig:kernelMocapEstimate2} show the approximations $\hat{g}_{N}(t,\cdot)$ of $\Pi_N G$  over an orbit in the input space defined by the coordinates $\theta^{(1)}$ and $\theta^{(2)}$. The estimates are generated for both approximation Method (1) and (2), respectively. In both estimates, the Matern-Sobolev kernel with $\beta =10 $ is used. Additionally, the kernel centers for both estimates are chosen so that there is sufficient separation distance of at least 10 radians between each of the kernel centers, $\Xi_N$. As mentioned previously, the two estimates respond differently to the regularization term. A relatively small regularization parameter $\gamma = 0.001$ was selected for the estimate from approximation Method (1). The second approximation exhibited oscillations over the data for small values of $\gamma$. Consequently, we increased the regularization parameter to $\gamma = 2$ for the estimate from the second approximation method. 

When comparing these approximations there are a couple of key things to note. The estimate using the second approximation method in this study is generated using a continuous evolution law  as given by Equation \ref{eq:approxNevolution}. However, the collected data consists of discrete sets of motion capture data collected at discrete times. In order to utilize the collected data and also have a continuous evolution update, we use MATLAB's built-in functions to fit splines that are continuous in time with knots at the discrete states. This builds a continuous approximation of the orbit over small time intervals. While the approximation given by Method (1) does not need this step, we must approximate the integral over the orbits given the discrete data and a particular quadrature rule. Consequently, the estimate of Method (1) is susceptible to error associated with quadrature approximations. 

\begin{figure}[htp]
    \centering
     \includegraphics[width = 0.9 \textwidth]{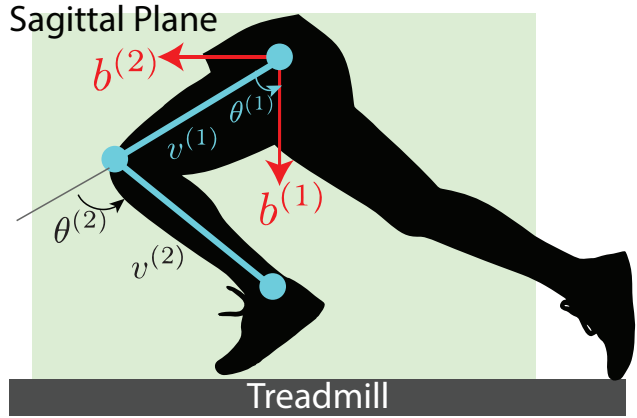}
    \caption{An illustration of the inputs $\theta^{(1)}$ and $\theta^{(2)}$, which roughly correspond to hip and knee flexion respectively.  These input variables can be mapped to measured marker coordinates placed on joints such as the knee or ankle.}
    \label{fig:kinematicFigure}
\end{figure}

\begin{figure}[htp]
    \centering\
    \begin{subfigure}[b]{0.47\textwidth}
        \includegraphics[width = \textwidth]{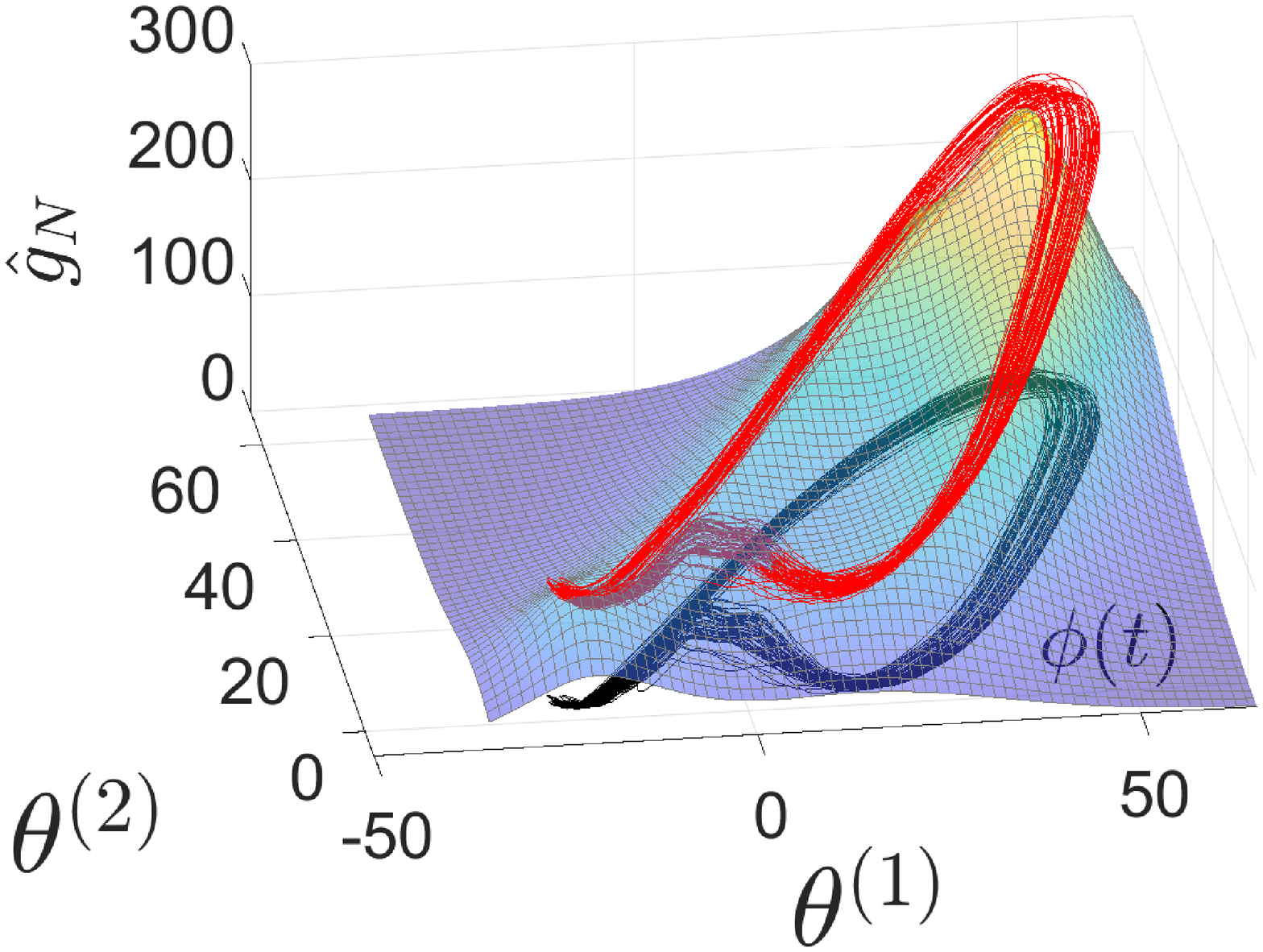}
         \caption{}
        \label{fig:kernelMocapEstimate1}
    \end{subfigure}
    \begin{subfigure}[b]{0.47\textwidth}
        \includegraphics[width=\textwidth]{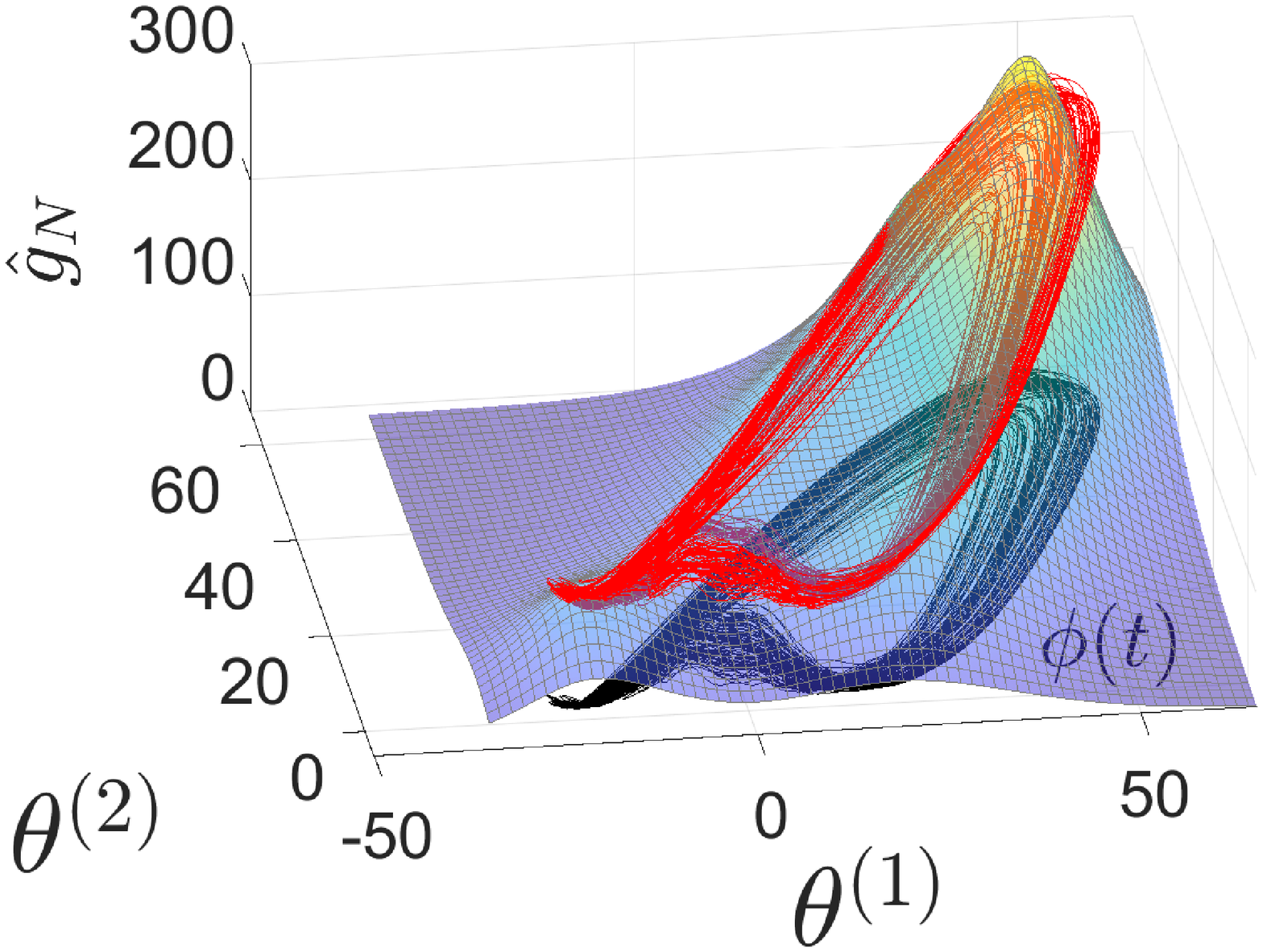}
        \caption{}
        \label{fig:kernelMocapEstimate2}
        \end{subfigure}
    \caption{ The estimates $\hat{g}_N(t,\cdot)$ of the ankle coordinate in the $b^{(2)}$ direction over the input space given by the angles $\theta^{(1)}$ and $\theta^{(2)}$ associated with hip and knee flexion respectively using (a) Approximation Method (1), and (b) Approximation Method (2). It must be noted that the first Approximation method shows relatively small error for the given sample size compared to Method (2). }
\end{figure}

\section{Conclusions}
In this paper, an optimal, offline estimation problem is formulated for continuous time regression over state spaces that include certain types of smooth manifolds. A new persistency of excitation condition is introduced that is well-defined for flows on manifolds, and it is used to obtain sufficient conditions for convergence.  Error estimates are derived that characterize the rate of convergence of finite dimensional estimates of the solution of the regression problem in continuous time over manifolds.  We then discuss two methods to generate finite-dimensional approximations of the optimal regression estimate. Numerical simulations are presented to better illustrate the qualitative behavior of the two approximation methods. Finally, we discuss and analyze results on estimating functions over motion capture data to demonstrate how to implement the algorithm on experimental data.

\section{Appendix}
\subsection{Background on Galerkin Approximations} 
\label{sec:galerkinbackground}
Let $U$ be a real Hilbert space, $A\in \calL(U)$ be a  bounded linear operator on $U$,    $b\in U$ be a fixed element of $U$, and suppose we seek to find $u\in U$ that satisfies the operator equation 
\begin{equation*}
Au=b.
\end{equation*}
It is customary that the existence and uniqueness of the solution of this equation is established by studying the associated bilinear form $a(\cdot,\cdot):U\times U \rightarrow \RR$ given by $a(u,v):=(Au,v)$ for all $u,v\in U$. Then the operator equation above is equivalent to finding the $u\in U$ for which 
\begin{equation}
    a(u,v)=\langle b,v\rangle_U \quad \text{ for all } v\in U. \label{eq:weakeq2}
\end{equation}
The Lax-Milgram Theorem given below stipulates a concise pair of conditions that ensure the well-posedness of the operator equation. 

\begin{theorem}[Lax-Milgram Theorem \cite{ciarlet2013linear}]
\label{th:laxmilgram}
The bilinear form $a(\cdot,\cdot):U\times U \rightarrow \RR$ is bounded if there is a constant $C_1>0$ such that 
\begin{align*}
    |a(u,v)|\leq C_1 \|u\|_U \|v\|_U \quad \text{ for all } u,v\in U, 
\end{align*}
and it is coercive if there is a constant $C_2>0$ such that 
\begin{equation*}
    |a(u,u)|\geq C_2 \|u\|^2_U \quad \text{ for all } u\in U. 
\end{equation*}
If the bilinear form $a(\cdot,\cdot)$ is bounded and coercive, then $A^{-1}\in \calL(U)$ and there is a unique solution $u\in U$ to Equation \ref{eq:weakeq2}.
\end{theorem}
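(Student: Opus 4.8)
The plan is to prove the statement by showing directly that the operator $A$ appearing in the definition $a(u,v):=(Au,v)$ is a bijection of $U$ onto itself whose inverse is bounded. Existence and uniqueness for Equation \ref{eq:weakeq2} then follow at once, since that equation reads $(Au,v)=\langle b,v\rangle_U$ for every $v\in U$, which holds if and only if $Au=b$; once $A^{-1}\in\calL(U)$ is established, $u=A^{-1}b$ is the unique solution. Observe first that boundedness of the form is automatic here because $A\in\calL(U)$ is assumed from the outset, so the real content is extracted from coercivity. The first step is to derive the lower bound $\|Au\|_U\geq C_2\|u\|_U$ for all $u\in U$, obtained from the Cauchy--Schwarz estimate $C_2\|u\|_U^2\leq |a(u,u)|=|(Au,u)|\leq \|Au\|_U\,\|u\|_U$. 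This single inequality shows that $A$ is injective and bounded below.

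Next I would establish surjectivity in two stages. For closedness of the range, suppose $Au_n\to y$ in $U$; the lower bound gives $\|u_n-u_m\|_U\leq C_2^{-1}\|Au_n-Au_m\|_U$, so $(u_n)$ is Cauchy, hence $u_n\to u$ for some $u\in U$, and continuity of $A$ yields $Au=y$, so $y\in\Range(A)$. For density of the range, suppose $w\in U$ satisfies $(Au,w)=0$ for every $u\in U$; choosing $u=w$ and invoking coercivity gives $0=(Aw,w)=a(w,w)\geq C_2\|w\|_U^2$, which forces $w=0$. Therefore $\Range(A)^{\perp}=\{0\}$, so the range is dense; combined with closedness this gives $\Range(A)=U$, and $A$ is a bijection. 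Finally, setting $v=Au$, equivalently $u=A^{-1}v$, in the lower bound yields $\|A^{-1}v\|_U\leq C_2^{-1}\|v\|_U$, so $A^{-1}\in\calL(U)$ with $\|A^{-1}\|\leq 1/C_2$.

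The main obstacle is surjectivity of $A$, and the point worth flagging is that it draws on coercivity in two distinct ways: once to get the lower bound that forces the range to be closed, and once, applied to an element orthogonal to the range, to force that orthogonal complement to be trivial. An alternative route avoids this dichotomy by using the Banach fixed point theorem: one shows that the affine map $T_\rho(u):=u-\rho(Au-b)$ is a contraction on $U$ for sufficiently small $\rho>0$, where boundedness supplies the upper bound $\|Au\|_U\leq C_1\|u\|_U$ and coercivity supplies $(Au,u)\geq C_2\|u\|_U^2$, so that $\|T_\rho(u)-T_\rho(v)\|_U^2\leq (1-2\rho C_2+\rho^2 C_1^2)\|u-v\|_U^2$ with the bracket less than $1$ for $\rho\in(0,2C_2/C_1^2)$. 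The unique fixed point solves $Au=b$, delivering existence and uniqueness simultaneously; I would present the first argument as the primary proof since it also yields the operator-norm estimate $\|A^{-1}\|\leq 1/C_2$ cleanly.
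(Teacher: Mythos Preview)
Your proof is correct and follows essentially the same route as the paper's: both derive the lower bound $\|Au\|_U\geq C_2\|u\|_U$ from coercivity to obtain injectivity and $\|A^{-1}\|\leq 1/C_2$, use a Cauchy-sequence argument for closedness of the range, and apply coercivity to an element of $\Range(A)^\perp$ to force surjectivity. The Banach fixed-point alternative you sketch is a nice addition not present in the paper.
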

\begin{proof}
The first condition above, continuity of the bilinear form,  ensures that $A\in \calL(U)$ by definition. The coercivity condition implies that the nullspace of $A$ is just  $\{0\}$. As a result, we know that $A$ is one-to-one. This means that the operator $A^{-1}:\text{range}(A)\rightarrow U$ is well-defined. From the coercivity condition we also conclude that 
\begin{equation*}
    \|A^{-1}b\|^2 \leq \frac{1}{C_2}|\langle A\cdot A^{-1}b,A^{-1}b\rangle_U|\leq \frac{1}{C_2} \|b\|_U \|A^{-1}b\|_U
\end{equation*}
for every $b\in \text{range}(A)$. This means that $A^{-1}\in \calL(\text{range}(A),U)$ and $\|A^{-1}\|\leq 1/C_2$. 

One implication of the fact that $A^{-1}\in \calL(\text{range}(A),U)$  is that $\text{range}(A)$ is closed.  Suppose that $\{b_k\}_{k\in \NN}\subset \text{range}(A)$ and $b_k\rightarrow \bar{b}$. By construction there is a sequence $\{u_k\}_{k\in \NN}\subset U$ such that $Au_k=b_k$.   But we have 
\begin{equation*}
    \|u_m-u_n\|= \|A^{-1}(y_m-y_n)\|_U \leq \|A^{-1}\| \cdot \|y_m-y_n\|\rightarrow 0,
\end{equation*}
and $\{u_k\}_{k\in \NN}$ is a Cauchy sequence in the complete space $U$. There is a limit $u_k\rightarrow \bar{u}\in U$. By the continuity of the operator $A$, we know that $A\bar{u}=\bar{b}$, hence $\bar{b}\in \text{range}(A)$. The range of $A$ is consequently closed.

It only remains to show that $\text{range}(A)=U$. Suppose to the contrary there is a $\bar{b} \neq 0$ with $\bar{b} \in (\overline{\text{range}(A)})^\perp$. Since $\calN(A^*) = (\overline{\text{range}(A)})^\perp$, we know that 
\[
\langle A^*b,w \rangle_U = \langle b,Aw \rangle_U = 0
\]
 By the coercivity condition, we must have $0=\langle A\bar{b},\bar{b}\rangle_U\geq C_2 \|\bar{b}\|_U^2 \not = 0$. But this is a contradiction and $\text{range}(A)$ is all of $U$.
\end{proof}

Next, we discuss how error bounds are derived for Galerkin approximations $u_N$ of the solution $u$ of the operator equations above.  Let $U_N\subseteq U$ be a finite dimensional subspace of $U$. By definition, the Galerkin approximation $u_N\in U_N$ is the unique solution of the equation 
\begin{equation}
a(u_N,v_N)=\langle b,v_N\rangle_U \quad \text{ for all } v_N\in U_N. \label{eq:galerkin}
\end{equation}
The theorem below summarizes one  of the well-known bounds on the error $u-u_N$ between the Galerkin approximation $u_N\in U_N$ and the true solution $u\in U$. 
\begin{theorem}[Cea's Lemma, \cite{ciarlet2013linear}]
\label{th:galerkinerror}
Suppose that the hypotheses of the Lax-Milgram Theorem \ref{th:laxmilgram} hold. There is a unique solution $u_N\in U_N$ of the Galerkin Equation  \ref{eq:galerkin}.  The error $u-u_N$ is $a$-orthogonal to the subspace $U_N$ in the sense that 
\[
a(u-u_N,v_N)=0 \quad \text{ for all } v_N\in U_N.
\]
We also have the error bound 
\[
\|u-u_N\|_U\leq \frac{C_1}{C_2} \min_{v_N\in U_N} \|u-v_N\|_U=\frac{C_1}{C_2}\|(I-\Pi_N)u\|_U
\]
where $\Pi_N$ is the $U$-orthogonal projection of $U$ onto $U_N$. 
\end{theorem}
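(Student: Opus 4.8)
The plan is to reduce everything to three ingredients: existence on the subspace, Galerkin orthogonality, and a coercivity/boundedness estimate. First I would establish the existence and uniqueness of $u_N$ by applying the Lax--Milgram Theorem (Theorem \ref{th:laxmilgram}) not to $U$ but to the finite dimensional subspace $U_N$. Since $U_N$ is finite dimensional it is a closed subspace of $U$, hence a Hilbert space in its own right under the inherited inner product, and the restriction of $a(\cdot,\cdot)$ to $U_N\times U_N$ inherits the same boundedness constant $C_1$ and coercivity constant $C_2$. The functional $v_N\mapsto \langle b,v_N\rangle_U$ is bounded on $U_N$, so Theorem \ref{th:laxmilgram} yields a unique $u_N\in U_N$ solving Equation \ref{eq:galerkin}.

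Next I would derive the Galerkin orthogonality relation. The true solution $u$ satisfies $a(u,v)=\langle b,v\rangle_U$ for every $v\in U$, and in particular for every $v_N\in U_N\subseteq U$. Subtracting the Galerkin Equation \ref{eq:galerkin}, tested against the same $v_N$, the right-hand sides cancel and we obtain
\[
a(u-u_N,v_N)=0 \quad \text{ for all } v_N\in U_N,
\]
which is the asserted $a$-orthogonality of the error to $U_N$.

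The heart of the argument is the quasi-optimality estimate, which I would obtain by combining coercivity, the orthogonality just established, and boundedness. Fix an arbitrary $v_N\in U_N$. Coercivity gives $C_2\|u-u_N\|_U^2 \leq a(u-u_N,u-u_N)$, and I would then split the second argument as $u-u_N=(u-v_N)+(v_N-u_N)$. Because $v_N-u_N\in U_N$, Galerkin orthogonality annihilates the term $a(u-u_N,v_N-u_N)$, leaving $a(u-u_N,u-u_N)=a(u-u_N,u-v_N)$. Applying boundedness to this last expression yields
\[
C_2\|u-u_N\|_U^2 \leq a(u-u_N,u-v_N)\leq C_1\|u-u_N\|_U\,\|u-v_N\|_U,
\]
and dividing through by $\|u-u_N\|_U$ (the bound being trivial when $u=u_N$) gives $\|u-u_N\|_U\leq (C_1/C_2)\|u-v_N\|_U$. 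Since $v_N\in U_N$ was arbitrary, I would pass to the infimum over $U_N$, which is attained because $U_N$ is finite dimensional, and finally invoke the best-approximation characterization of the orthogonal projection, $\min_{v_N\in U_N}\|u-v_N\|_U=\|(I-\Pi_N)u\|_U$, to conclude.

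I do not expect any serious obstacle here: the result is classical and each step is elementary once the structure is in place. The one point that carries the whole argument, and which I would take care to present cleanly, is the splitting $u-u_N=(u-v_N)+(v_N-u_N)$ together with the observation that the increment $v_N-u_N$ lies in $U_N$; this is precisely what lets Galerkin orthogonality convert the coercivity lower bound into a comparison against the arbitrary competitor $v_N$, and hence into a best-approximation bound.
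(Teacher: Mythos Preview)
Your proposal is correct and follows essentially the same argument as the paper: apply Lax--Milgram on $U_N$ for existence and uniqueness, subtract the weak formulations to obtain Galerkin orthogonality, and then combine coercivity, the orthogonality relation, and boundedness to get the quasi-optimality estimate. Your write-up is in fact slightly more explicit than the paper's in spelling out the splitting $u-u_N=(u-v_N)+(v_N-u_N)$ and in noting the best-approximation characterization $\min_{v_N\in U_N}\|u-v_N\|_U=\|(I-\Pi_N)u\|_U$, but the underlying logic is identical.
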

\begin{proof}
First, note that when $a(\cdot,\cdot)$ satisfies the boundedness and coercivity conditions, its restriction $a:U_N\times U_N \rightarrow \RR$ to $U_N$ satisfies the boundedness and coercivity conditions with the same constants relative to $U_N$.  This means that the Galerkin equations have a unique solution $u_N\in U_N$. Since Equation \ref{eq:weakeq2} holds for all $v\in U$, it holds for all $v_N\in U_N$. We can subtract Equations \ref{eq:weakeq2} and \ref{eq:galerkin} for each $v_N\in U_N$ and obtain 
\[
a(u-u_N,v_N)=0
\]
for each $v_N\in U_N$. Using the boundedness and coercivity of the bilinear form, as well as the $a-$orthogonality of the error, we have 
\begin{align*}
    C_2\|u-u_N\|_U^2 &\leq a(u-u_n,u-u_N)=a(u-u_n,u-v_N)\\ 
    &\leq C_1\|u-u_N\|_U \|u-v_N\|_U
\end{align*}
for any $v_N\in U_N$. The theorem now follows after canceling the common term on the right and left. 
\end{proof}

\bibliographystyle{spmpsci}
\bibliography{regression}

\section*{Statements and Declarations}
The authors declare that no funds, grants, or other support were received during the preparation of this manuscript. The authors have no relevant financial or non-financial interests to disclose. The datasets generated during and/or analysed during the current study are available from the corresponding author on reasonable request

\end{document}